\documentclass[11pt]{amsart}
\usepackage{amssymb, amsmath, amsfonts, amsthm, esint,graphicx,xcolor}
\usepackage[abbrev]{amsrefs}
\usepackage[margin=0.8 in]{geometry}
\usepackage[colorlinks=true, linkcolor=blue]{hyperref}

\def \eps {\varepsilon}

\def \hsp {\hspace{0.2 in}}

\DeclareMathOperator{\Ric}{Ric}

\DeclareMathOperator{\tr}{tr}
\DeclareMathOperator{\Hess}{Hess}
\DeclareMathOperator{\diam}{diam}
\DeclareMathOperator{\vol}{vol}
\DeclareMathOperator{\Vol}{vol}

\DeclareMathOperator{\diver}{div}

\DeclareMathOperator{\Int}{int}
\DeclareMathOperator{\supp}{supp}

\theoremstyle{definition}
\newtheorem{theorem}{Theorem}[section]

\newtheorem{lemma}[theorem]{Lemma}
\newtheorem{remark}{Remark}[section]

\newtheorem{proposition}[theorem]{Proposition}

\newtheorem{corollary}{Corollary}[section]

\title[$p$-Laplace eigenvalue lower bound]{Sharp lower bounds of the first eigenvalue of the $p$-Laplacian under integral Ricci curvature}
\author{Shoo Seto}
\address{Department of Mathematics\\
         California State University, Fullerton\\
         Fullerton, CA 92831}
\email{\href{mailto:shoseto@fullerton.edu}{shoseto@fullerton.edu}}
\subjclass{58C40, 35P15}

\begin{document}
\begin{abstract}
    We prove a sharp Zhong-Yang type lower bound for the first nonzero eigenvalue of the $p$-Laplacian on closed manifolds with small integral Ricci curvature for $p \geq 2$.  The proof combines a global elliptic estimate obtained by Sobolev truncation with strict perturbations of one-dimensional comparison profiles.
\end{abstract}
\maketitle

\section{Introduction}
On a Riemannian manifold, the eigenvalues of the Laplacian are deeply related to the underlying geometry.  In particular, lower bounds of the first nonzero eigenvalue have been extensively studied.  Under pointwise lower bounds of the Ricci curvature, sharp estimates in terms of the geometric data have been obtained by gradient estimates and comparison with one-dimensional models. A natural question is whether these estimates remain stable when the curvature hypothesis is weakened to the smallness of the integral Ricci curvature. In this paper, we establish a sharp Zhong-Yang type lower bound estimate of the first nonzero eigenvalue of the $p$-Laplacian under such an integral curvature setting.

Let $(M^n,g)$ be a smooth closed connected Riemannian manifold, and write $D=\diam(M,g)$. For $1<p<\infty$, we define the $p$-Laplacian as
\[
  \Delta_p u=\diver\bigl(|\nabla u|^{p-2}\nabla u\bigr).
\]
The first nonzero eigenvalue is characterized by
\begin{equation}\label{eq:variational}
 \lambda_{1,p}(M,g)=
 \inf_{u\in W^{1,p}(M)\setminus\{0\}}\left\{
 \frac{\int_M|\nabla u|^p dV_g}{\int_M|u|^pd V_g}:
 \begin{array}{l}
 \int_M|u|^{p-2}u d V_g=0
 \end{array}\right\}.
\end{equation}
A minimizer satisfies the eigenvalue equation
\begin{equation}\label{plaplaceeigen}
    -\Delta_pu=\lambda|u|^{p-2}u
\end{equation}in the weak sense. Throughout, $p$ denotes the exponent of the operator, while $q$ denotes the integrability exponent of the curvature defect.

For $p=2$, the classical theorem of Lichnerowicz~\cite{Lichnerowicz1958} asserts that $\Ric_g\ge(n-1)Kg$, with $K>0$, implies $\lambda_{1,2}\ge nK$. Obata~\cite{Obata1962} characterized equality by the round sphere of sectional curvature $K$. In the nonnegative Ricci curvature setting Zhong and Yang~\cite{ZhongYang1984} obtained the sharp inequality
\begin{equation}\label{eq:ZY}
 \lambda_{1,2}(M,g)\ge\frac{\pi^2}{D^2},
 \qquad \Ric_g\ge0.
\end{equation}
For general constant lower Ricci bounds, the work of Kr\"oger~\cite{Kroger1992} and Bakry and Qian~\cite{BakryQian2000} developed sharp comparisons with one-dimensional eigenvalue problems.

Analogous questions for the nonlinear operator $\Delta_p$ require control of its degeneracy and of the possible asymmetry of the extrema of an eigenfunction. Matei~\cite{Matei2000} proved comparison results for the $p$-Laplacian, including a sharp spherical lower comparison under a positive Ricci lower bound. Valtorta~\cite{Valtorta2012} established the nonlinear analogue of~\eqref{eq:ZY}:
\begin{equation}\label{eq:Valtorta}
 \lambda_{1,p}(M,g)\ge(p-1)\left(\frac{\pi_p}{D}\right)^p,
 \qquad \Ric_g\ge0,
\end{equation}
where
\begin{equation}\label{eq:pip}
 \pi_p=2\int_0^1\frac{ds}{(1-s^p)^{1/p}}
      =\frac{2\pi}{p\sin(\pi/p)}.
\end{equation}
This estimate holds for every $1<p<\infty$, also with Neumann boundary conditions when a convex boundary is present.  Naber and Valtorta~\cite{NaberValtorta2014} developed this comparison method further and obtained sharp estimates under a negative constant lower bound for Ricci curvature. 

Generalizing the pointwise lower bound condition, integral curvature permits the Ricci tensor to fall below the comparison value on a set whose contribution is small in an averaged sense. More precisely, let $r_g(x)$ be the smallest eigenvalue of the Ricci endomorphism and, for $K\in\mathbb R$, define
\[
 \rho_K(x)=\bigl((n-1)K-r_g(x)\bigr)_+:=\max\{0,(n-1)K-r_g(x)\}.
\]
Note $\|\rho_K\|_{L^q}=0$ if and only if $\Ric_g\ge(n-1)Kg$. Gallot~\cite{Gallot1988} showed that suitable integral control of $\rho_K$, together with a diameter bound, yields isoperimetric and Sobolev inequalities and lower spectral bounds. Petersen and Wei~\cite{PetersenWei1997} established integral versions of Laplacian and relative volume comparison. Petersen and Sprouse~\cite{PetersenSprouse1998} obtained further isoperimetric and eigenvalue comparisons, including almost-optimal estimates in the positive-curvature regime. Aubry~\cite{Aubry2007} proved quantitative Lichnerowicz-type estimates under smallness of the normalized $L^q$ defect below a positive Ricci lower bound, for $q>n/2$, together with diameter and topological consequences.  For the $p$-Laplacian, Seto and Wei~\cite{SetoWei2017} established eigenvalue comparisons under integral Ricci curvature assumptions. Their results include Cheng-type upper bounds for Dirichlet eigenvalues of balls, explicit Lichnerowicz-type lower bounds, and a sharp comparison with the first eigenvalue of the sphere.

 For Zhong-Yang type estimates, Ramos Oliv\'e, Seto, Wei, and Zhang~\cite{RamosSetoWeiZhang2020} proved that, for every $q>n/2$ and every $\delta\in(0,1)$, sufficiently small normalized integral Ricci curvature with $K=0$ implies
\[
 \lambda_{1,2}(M,g)\ge(1-\delta)\frac{\pi^2}{D^2}.
\]
 The present paper extends this sharp diameter estimate to $p\ge2$ and complements the positive-curvature comparisons for the $p$-Laplacian described above.

To state our result in scale-invariant form, set $\rho=\rho_0$ and consider the normalized measure $d\mu = \frac{dV_g}{\vol(M)}.$
Define
\begin{equation}\label{eq:kappa}
 \kappa_q(M,g)=D^2\|\rho\|_{L^q(\mu)}
 =D^2\left(\frac{1}{\Vol_g(M)}\int_M\rho^q  dV_g\right)^{1/q}.
\end{equation}
Both $\kappa_q(M,g)$ and $D^p\lambda_{1,p}(M,g)$ are invariant under constant rescaling of the metric.

\begin{theorem}\label{thm:main}
Fix $n\ge2$, $2\le p<\infty$, $n/2<q<\infty$, and $\delta\in(0,1)$. There exists $\varepsilon=\varepsilon(n,p,q,\delta)>0$ such that every smooth closed connected Riemannian $n$-manifold $(M,g)$ satisfying
\[
 \kappa_q(M,g)\le\varepsilon
\]
satisfies
\begin{equation}\label{eq:main}
 \lambda_{1,p}(M,g)\ge(1-\delta)(p-1)
                  \left(\frac{\pi_p}{D}\right)^p.
\end{equation}
\end{theorem}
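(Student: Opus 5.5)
We follow the Valtorta / Naber--Valtorta gradient comparison scheme and replace the pointwise Bochner inequality by an integral one, in the spirit of \cite{RamosSetoWeiZhang2020}. After scaling we assume $D=1$ and argue by contradiction: if $\lambda=\lambda_{1,p}<(1-\delta)(p-1)\pi_p^p$, we look for a contradiction once $\kappa_q\le\varepsilon$.

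\textbf{Step 1: the one-dimensional profile.} Let $w$ solve the one-dimensional model
\[
 (p-1)|w'|^{p-2}w''+\lambda|w|^{p-2}w=0 ,
\]
so that $w=\sin_p$-type with half-period $\pi_p(\lambda/(p-1))^{-1/p}>1+c(\delta)$. Normalize an eigenfunction $u$ by $\max u=1\ge-\min u$. As in Valtorta, the comparison we want is
\[
 |\nabla u|\le |w'|\circ w^{-1}(u).
\]
Because $\lambda$ is strictly below the model value, the model interval is strictly longer than the diameter. Integrating the comparison along a minimizing geodesic from a minimum point to a maximum point then forces the diameter to exceed $1$, which is the contradiction. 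The strict gap $\delta$ leaves room to perturb the profile to $w_\eta$, for instance by replacing $\lambda$ with $\lambda(1+\eta)$ and shifting. The perturbed profile satisfies the model ODE with a strict slack $\sigma(\delta)>0$ on the relevant range.

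\textbf{Step 2: the gradient comparison, and the main obstacle.} Set $\psi=w_\eta^{-1}$, $v=\psi(u)$, and let $F=|\nabla v|^p-\Phi(v)$ with $\Phi$ chosen so that $F\le0$ is the desired bound. The linearized $p$-Bochner formula for the operator $P$, the linearization of $\Delta_p$ weighted by $|\nabla u|^{p-2}$, gives on $\{F>0\}$ an inequality of the form
\[
 P(F)\ge -C\,\rho\,|\nabla v|^{p}+\sigma|\nabla v|^{p}+\text{(gradient terms in }\nabla F).
\]
Under $\Ric\ge0$ the $\rho$ term vanishes and the maximum principle finishes the argument. Here I expect the main obstacle: controlling $\int\rho|\nabla v|^p$ on the set where $F$ is large. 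I would handle it with the Sobolev truncation advertised in the abstract. Multiply by $(F-k)_+^{\beta}$ and integrate by parts; $P$ is degenerate, but $p\ge2$ makes its weight $|\nabla u|^{p-2}$ bounded, and on $\{F>0\}$ the gradient $|\nabla v|$ is bounded below. Then run a De Giorgi / Moser iteration using Gallot's and Petersen--Wei's Sobolev inequality, valid under small $\kappa_q$ with $q>n/2$. Hölder with $\rho\in L^q$, $q>n/2$, makes the curvature term absorbable once $\varepsilon$ is small relative to $\sigma$. The outcome is $\sup(F)_+\le C\varepsilon^{\gamma}\,\|F_+\|_{L^1}$ or similar.

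\textbf{Step 3: a priori bounds and closing the argument.} The right-hand side of Step 2 needs an a priori $L^\infty$ gradient bound for $u$ in terms of $\lambda$. I would obtain it by the same Moser scheme applied to $|\nabla u|^p$ with Cheng-type constants; these depend only on $(n,p,q)$ under small $\kappa_q$. Combining, $F\le C\varepsilon^{\gamma}$. The slack from $\eta$ absorbs this small error, either by perturbing $\Phi$ once more or by a continuity argument in $\eta$, giving $F\le0$ for the perturbed profile. Regularity near critical points of $u$ is handled by the standard regularization $\Delta_{p,\epsilon}$ and a limiting argument. The asymmetric case $-\min u<1$ is treated as in Valtorta, by comparison with the family of asymmetric one-dimensional models whose half-lengths are all at least $\pi_p/2$ scaled. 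Choosing $\varepsilon(n,p,q,\delta)$ small enough for the absorption yields \eqref{eq:main}.
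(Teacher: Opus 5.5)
Your outline has the right ingredients: a Bochner identity with a $\rho$-error, a strictly perturbed profile, Gallot's Sobolev inequality, truncation, and integration along a geodesic. However, the crux, Step 2, does not close as you set it up.

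In the paper's normalization $F=|\nabla u|^p-z(u)$, the inequality is $L_NF\ge|\nabla u|^{2-p}(E_N-pd'F)-p\rho|\nabla u|^p$ with $L_N=\diver(A\nabla\cdot)+B_N\cdot\nabla$. The first-order term $B_N\cdot\nabla F$ has no sign. Testing with $(F-k)_+$, the drift costs $\frac12\int c_B(F-k)_+^2$ (as in the proof of Lemma~\ref{mainlemma}). That cost is quadratic in $(F-k)_+$, while the slack only contributes a term linear in $(F-k)_+$. Also, $-pd'F$ is harmless only when $|F|$ is small.

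So on $\{F>0\}$, where $F$ may be of size one, the slack cannot absorb these terms. Even granting your a priori gradient bound, a De Giorgi--Stampacchia iteration only yields a bound of the form $\sup F_+\lesssim\|F_+\|_{L^1}+\kappa$, with no small factor on $\|F_+\|_{L^1}$. Your claimed $\sup F_+\le C\varepsilon^\gamma\|F_+\|_{L^1}$ does not follow, and a continuity argument applied afterwards cannot repair this.

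The missing idea is to touch \emph{before} truncating. Since $u/c$ solves the same equation, decrease $c$ from $\infty$ until $\max(|\nabla v|^p-z(v))=0$. Then truncate just below the contact level, $Y=(F+\eps_0)_+$. On $\{Y>0\}$ three things hold:
\begin{itemize}
\item $z/2<|\nabla v|^p\le z_{\max}$, so $|\nabla v|$ is bounded above and away from zero and $c_B$ is bounded;
\item $E_N-pd'F\ge E_{\min}/2$;
\item because $0\le Y\le\eps_0$, the constant produced by the slack dominates $(\lambda^{2/p}+c_B)Y$.
\end{itemize}
Lemma~\ref{mainlemma} then gives $\eps_0=\|Y\|_\infty\le C\kappa$, a contradiction for small $\kappa$. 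This removes the need for your Step 3 gradient bound. It does require the range of $u$ to lie in a compact interval on which the profile is bounded below, which the paper arranges by passing to $\xi u$ with $\xi<1$.

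There are two further gaps.

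\textbf{The perturbation.} The one you suggest does not give uniform slack when $p>2$. For $z=\frac{\lambda'}{p-1}(1-|u|^p)$ with $\lambda'=\lambda(1+\eta)$, one finds $d=-\eta\lambda|u|^{p-2}u$ and $E_N=p\eta\lambda\lambda'|u|^{p-2}+\frac{p}{N-1}\eta^2\lambda^2|u|^{2p-2}$, which vanishes at $u=0$. Indeed, for $p>2$ any profile with $z'(0)=z''(0)=0$ has $E_N(0)=0$. The paper instead perturbs the $(z,d)$ system (Lemma~\ref{converge}) so that $E_N=p(N-1)\eps\lambda^{2/p}z^{2-2/p}>0$.

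\textbf{Range matching.} Writing ``as in Valtorta'' hides this step. To get the sharp length you need a model whose maximum equals the amplitude ratio $b=\max u/|\min u|$ (after replacing $u$ by $-u$ if necessary, $b\le1$). Since the smallness threshold depends on the model, you need this matching uniformly. The paper proceeds as follows:
\begin{itemize}
\item it proves $b\ge\beta_0$ by Moser iteration (Lemma~\ref{lem:amplitude});
\item it fixes an auxiliary dimension $N\ge n$ with $m_{N,\lambda}(0)^p\le(1+\log N)/N<\beta_0^p$, so that the radial models cover $[\beta_0,1)$;
\item it obtains a threshold independent of $b$ through the contradiction sequence $b_i\to b_\infty$ together with Remark~\ref{rescaling}.
\end{itemize}
Your closing step, choosing $\varepsilon(n,p,q,\delta)$ for the absorption, does not address this dependence on the model.
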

 For $\rho\equiv0$, letting $\delta\downarrow0$ recovers~\eqref{eq:Valtorta}. The limiting constant $(p-1)\pi_p^p$ is optimal even among closed manifolds of dimension $n\ge2$, as is seen by taking products with one circle factor fixed and the remaining circle factors tending to zero. 

A difficulty that arises in the integral curvature setting is in applying maximum principle.  In the pointwise Ricci lower bound case, gradient estimates are obtained by applying a maximum principle argument to the Bochner formula.  With only an integral control of the Ricci term, we immediately run into an obstacle.  This was overcome in \cite{RamosSetoWeiZhang2020} by introducing an auxiliary function which absorbs the Ricci curvature term.  The auxiliary function technique was introduced by Zhang and Zhu in \cite{ZhangZhu2017} and has since been applied in settings which use the maximum principle with only integral control of the curvature.   In our work, we obtain the gradient estimate by applying a global elliptic estimate to a quantity measuring the gradient defect, the difference between the gradient and the one-dimensional profile. In order to establish our gradient estimate, we require the profile to have a strictly positive residual.  Thus we construct a small perturbation of the profile to satisfy strict positivity. A contradiction argument gives the eigenvalue bounds from a diameter comparison.


\subsection*{Organization of the paper}
In \S 2, we record the necessary Sobolev and Poincar\'e inequalities, and the key elliptic estimate which we will apply to the gradient defect. In \S 3, we specialize to the $p$-Laplace setting and give the differential inequality which will yield a gradient comparison for profiles with strictly positive residual. \S 4 constructs the required one-dimensional comparison profiles. \S 5 proves Theorem \ref{thm:main}. Appendix A gives the length estimate for the radial model.
\subsection*{A.I. disclosure}
 The main mathematical ideas in this paper are due to the author and
were explored and developed further through an extended dialogue with ChatGPT 6 Astra.
The tool was used throughout the preparation of the manuscript, both in improving the
exposition and in developing some of the mathematical arguments. The author takes full
responsibility for the mathematical content of the paper.

\section{Analytic preliminaries}
In this section we give the necessary tools required for our analysis.   

\subsection{Sobolev and eigenfunction estimates}
Let
\begin{align*}
    n<\nu<2q, \quad s=\frac{2\nu}{\nu-2}.
\end{align*}
Denote $\kappa:=\kappa_q(M,g)$ and the $L^q$ norm under the normalized measure will be denoted $\|\cdot\|_q$.  For $\kappa$ sufficiently small, we have the uniform Sobolev inequality and rough Poincar\'e constant.

\begin{proposition}\label{prop:geometry}
There are $\eps,C_S,C_P>0$, depending only on
$n,q,\nu$, such that $\kappa\le\eps$ implies
\begin{align}
 \|h\|_{s}^2
 &\le C_S\bigl(D^2\|\nabla h\|_2^2+\|h\|_2^2\bigr),
 \label{eq:sobolev}\\
 \left\|h-\int_Mh\,d\mu\right\|_2^2
 &\le C_P^{-1}D^2\|\nabla h\|_2^2,
 \qquad h\in H^1(M).
 \label{eq:poincare}
\end{align}
\end{proposition}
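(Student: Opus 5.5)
This proposition is essentially a repackaging of known results: Gallot's isoperimetric and Sobolev estimates under small integral curvature, in the scale-invariant form of Petersen--Wei and Aubry. My plan is to reduce to those results by scaling and then read off \eqref{eq:sobolev} and \eqref{eq:poincare}.

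\emph{Step 1: normalize.} Both inequalities are scale invariant. Under $g\mapsto c^2 g$, the quantities $D^2\|\nabla h\|_2^2$, the normalized $L^s$ and $L^2$ norms, and $\kappa$ are all unchanged. So I may assume $D=1$, and then $\|\rho\|_{L^q(\mu)}=\kappa\le\eps$.

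\emph{Step 2: isoperimetric control.} I would invoke Gallot's theorem, in the form refined by Petersen--Sprouse and Aubry. For $q>n/2$ and $\nu\in(n,2q)$, there is $\eps(n,q,\nu)$ such that $D=1$ and $\|\rho\|_{L^q(\mu)}\le\eps$ imply a lower bound on the normalized isoperimetric profile:
\[
 \frac{\vol(\partial\Omega)}{\vol(M)}\ \ge\ c(n,q,\nu)\,\min\Big\{\frac{\vol\Omega}{\vol M},\,1-\frac{\vol\Omega}{\vol M}\Big\}^{(\nu-1)/\nu}.
\]
Petersen--Wei's integral Laplacian comparison plays the key role here. It controls the mean curvature defect $\psi=(\Delta r-(n-1)/r)_+$ by $\|\psi\|_{2q}\lesssim\|\rho\|_q^{1/2}$. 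Feeding this into Gromov's localization argument, with smallness absorbing the error, yields the profile. The condition $\nu<2q$ enters because the error is then of lower order relative to the model profile with exponent $\nu$.

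\emph{Step 3: from the profile to the functional inequalities.} I would run the standard coarea/Federer--Fleming argument.
\begin{itemize}
\item Apply the profile to superlevel sets of $|h-m|$, where $m$ is a median of $h$. This gives an $L^{\nu/(\nu-1)}$--$L^1$ Sobolev inequality for functions vanishing on half the volume.
\item Substitute $|h-m|^{\alpha}$ with the appropriate $\alpha$ and apply Hölder. This gives $\|h-m\|_s\le C\|\nabla h\|_2$ for $s=2\nu/(\nu-2)$.
\item Since $\nu>n\ge2$, the exponent $s$ is finite.
\end{itemize}
The bound $\|h-m\|_2\le\|h-m\|_s$ (normalized measure) shows $\|h-\int h\,d\mu\|_2\le 2\|h-m\|_2\le C\|\nabla h\|_2$. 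This is \eqref{eq:poincare} with $C_P^{-1}=C^2$. For \eqref{eq:sobolev}, I write $h=(h-m)+m$. The bound $|m|\le \sqrt2\|h\|_2$ follows from Chebyshev, since at least half the mass has $|h|\ge|m|$. This gives $\|h\|_s\le C\|\nabla h\|_2+\sqrt2\|h\|_2$; squaring then gives \eqref{eq:sobolev}.

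\emph{Main obstacle.} The only substantive step is Step 2, namely obtaining the isoperimetric profile with constants depending only on $(n,q,\nu)$ once $\eps$ is small. I would cite Gallot (via Petersen--Sprouse, or Aubry's quantitative version) rather than reprove it. The remaining work is to check that their normalization, $D^2\|\rho\|_{L^q(\mu)}$ with $K=0$, matches $\kappa$ here.
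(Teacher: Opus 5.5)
Your proposal is correct and follows the same route as the paper. The paper simply invokes Gallot's normalized isoperimetric theorem together with the Sobolev and Poincar\'e inequalities he derives from it \cite[Theorem~3, Theorem~6]{Gallot1988}, and your Step~3 spells out the standard coarea argument that those results package. Two small points are worth tightening.

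\begin{itemize}
\item Apply the $L^{\nu/(\nu-1)}$--$L^1$ inequality to $(h-m)_+$ and $(h-m)_-$ separately. Superlevel sets of $|h-m|$ need not have measure at most $1/2$.
\item For $\nu<2q$, you can replace your heuristic with a direct reduction. If the cited profile comes with exponent $1-\frac{1}{2q}$, any $\nu\in(n,2q)$ is reached by applying it with $q'=\nu/2$, since $\|\rho\|_{L^{q'}(\mu)}\le\|\rho\|_{L^q(\mu)}$ for the normalized measure.
\end{itemize}
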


These follow from Gallot's normalized isoperimetric theorem
\cite[Theorem~3, Theorem~6]{Gallot1988}.

The Poincar\'e inequality gives us a rough lower bound for the spectrum.
\begin{lemma}
\label{lem:rough-p}
Under \eqref{eq:poincare}, every nonconstant $p$-eigenfunction with
$p\ge2$ and eigenvalue $\lambda>0$ satisfies
\begin{equation}\label{eq:rough-p}
 \lambda^{2/p}\ge\frac{2\lambda_{1,2}}{p^2}
                  \ge\frac{2C_P}{p^2D^2}.
\end{equation}
\end{lemma}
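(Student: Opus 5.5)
The plan is to turn a $p$-eigenfunction into an admissible test function for the Rayleigh quotient of $\lambda_{1,2}$, and then use \eqref{eq:poincare} to bound $\lambda_{1,2}$ below. Let $u$ be a nonconstant weak solution of \eqref{plaplaceeigen} with $\lambda>0$. Standard regularity gives $u\in C^{1,\alpha}(M)$.

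First, testing the equation against the constant $1$ gives $\int_M|u|^{p-2}u\,d\mu=0$. Since $u\not\equiv0$, both $u_+$ and $u_-$ are nonzero. Next, testing against $u_\pm$, which lie in $W^{1,p}$ with $\nabla u_\pm=\pm\nabla u\,\chi_{\{\pm u>0\}}$, gives
\[
 \int_M|\nabla u_\pm|^p\,d\mu=\lambda\int_M u_\pm^p\,d\mu .
\]

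Set $w_\pm=u_\pm^{p/2}$. Because $p\ge2$, each $w_\pm$ is $C^1$ off the zero set and Lipschitz overall, so $w_\pm\in H^1(M)$. Its gradient is $|\nabla w_\pm|=\tfrac p2 u_\pm^{(p-2)/2}|\nabla u_\pm|$. Hölder's inequality with exponents $p/2$ and $p/(p-2)$ then yields
\[
 \int_M|\nabla w_\pm|^2d\mu
 \le\frac{p^2}{4}\Bigl(\int_M|\nabla u_\pm|^p d\mu\Bigr)^{2/p}\Bigl(\int_M u_\pm^p d\mu\Bigr)^{(p-2)/p}
 =\frac{p^2}{4}\lambda^{2/p}\int_M w_\pm^2\,d\mu .
\]
For $p=2$ this is an equality, and no Hölder step is needed.

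Now set $f=w_+-t\,w_-$, choosing $t>0$ so that $\int_M f\,d\mu=0$. This is possible because $\int w_\pm\,d\mu>0$. The functions $w_+$ and $w_-$ have disjoint supports, and their gradients vanish a.e. off the respective positivity sets. Hence $\int f^2=\int w_+^2+t^2\int w_-^2$ and $\int|\nabla f|^2=\int|\nabla w_+|^2+t^2\int|\nabla w_-|^2$. The variational characterization of $\lambda_{1,2}$ then gives
\[
 \lambda_{1,2}\int_M f^2d\mu\le\int_M|\nabla f|^2d\mu\le\frac{p^2}{4}\lambda^{2/p}\int_M f^2d\mu .
\]
Therefore $\lambda^{2/p}\ge 4\lambda_{1,2}/p^2\ge 2\lambda_{1,2}/p^2$.

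For the second inequality in \eqref{eq:rough-p}, apply \eqref{eq:poincare} to a first eigenfunction of the Laplacian, which has mean zero. This gives $\int|\nabla h|^2\ge C_P D^{-2}\int h^2$, so $\lambda_{1,2}\ge C_P/D^2$.

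The only delicate point is justifying $w_\pm\in H^1$ together with the chain rule. This uses $p\ge2$, so that $s\mapsto s_+^{p/2}$ is $C^1$, and the $C^{1,\alpha}$ regularity of $u$; everything else is elementary. The argument loses nothing beyond Hölder's inequality, which is why it actually produces the stronger constant $4/p^2$ in place of $2/p^2$.
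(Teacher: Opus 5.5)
Your proof is correct, and it takes a different route from the paper's. The paper never tests the equation with $u_\pm$. Instead it does the following:
\begin{itemize}
\item It splits $u$ at a median $a$. Since $v_\pm=(u-a)_\pm$ vanish outside sets of measure at most $1/2$, it gets $\frac{\lambda_{1,2}}{2}\int_M h_\pm^2\,d\mu\le\int_M|\nabla h_\pm|^2\,d\mu$ for $h_\pm=v_\pm^{p/2}$.
\item It applies H\"older once to the sum of the two pieces.
\item It then uses strict convexity of $c\mapsto\int_M|u-c|^p\,d\mu$, whose derivative at $0$ vanishes by the $p$-mean-zero condition, to compare $\int_M|u-a|^p\,d\mu$ with $\int_M|u|^p\,d\mu$.
\end{itemize}
You instead split at $0$ and extract the nodal identities $\int_M|\nabla u_\pm|^p\,d\mu=\lambda\int_M u_\pm^p\,d\mu$ from the equation. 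You then glue $w_+-t\,w_-$ into an exactly mean-zero competitor for $\lambda_{1,2}$. This avoids the median, the factor-$2$ loss from the half-measure supports, and the convexity step. It also treats $p=2$ without a separate case and gives the stronger bound $\lambda^{2/p}\ge 4\lambda_{1,2}/p^2$. What the paper's version buys in exchange is generality. It uses only the global identity $\int_M|\nabla u|^p\,d\mu=\lambda\int_M|u|^p\,d\mu$ and the constraint $\int_M|u|^{p-2}u\,d\mu=0$. So it is really a lower bound for the Rayleigh quotient of every admissible function in \eqref{eq:variational}. Your argument, by contrast, relies on the eigenfunction equation on each nodal domain; that is all the lemma requires, so this costs you nothing here.
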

\begin{proof}
Let $u$ be an eigenfunction.  Then for any test function $\varphi \in W^{1,p}(M)$, we have
\begin{align*}
    \int_M|\nabla u|^{p-2}\langle \nabla u,\nabla \varphi\rangle d\mu=\lambda\int_M|u|^{p-2}u\varphi d\mu.
\end{align*}
Testing the weak eigenfunction equation with $1$ and $u$ gives
\begin{equation}\label{eq:pmean-energy}
 \int_M|u|^{p-2}u\,d\mu=0,\qquad
 E:=\int_M|\nabla u|^p\,d\mu
       =\lambda\int_M|u|^p\,d\mu.
\end{equation}
For $p=2$, the first identity is ordinary mean zero, so the Rayleigh
characterization gives $\lambda\ge\lambda_{1,2}\ge C_PD^{-2}$.
Assume henceforth $p>2$.

Choose $a$ such that
$\mu\{u>a\},\mu\{u<a\}\le1/2$. Put
$v_+=(u-a)_+$, $v_-=(a-u)_+$, and $h_\pm=v_\pm^{p/2}$.
The nonzero sets of these two functions each have measure at most
$1/2$. Thus
\[
 \left(\int_Mh_\pm\,d\mu\right)^2
 \le\frac12\int_Mh_\pm^2\,d\mu,
 \qquad
 \frac{\lambda_{1,2}}2\int_Mv_\pm^p\,d\mu
 \le\frac{p^2}{4}\int_Mv_\pm^{p-2}|\nabla v_\pm|^2\,d\mu.
\]
Adding and applying H\"older with exponents $p/(p-2)$ and $p/2$,
with $J=\int_M|u-a|^p\,d\mu$, yields
\[
 \frac{\lambda_{1,2}}2J
 \le\frac{p^2}{4}J^{(p-2)/p}E^{2/p}.
\]
The first identity in \eqref{eq:pmean-energy} says that the derivative at zero of the
strictly convex function $c\mapsto\int_M|u-c|^p\,d\mu$ vanishes.
Hence $\int_M|u-a|^pd\mu\ge\int_M|u|^p\,d\mu>0$. Dividing the preceding inequality
by $J^{(p-2)/p}$ and using the identity for $E$ proves
\eqref{eq:rough-p}.
\end{proof}
We next obtain a uniform $L^\infty$ bound of the eigenfunction by applying a standard Moser iteration technique.
\begin{lemma}\label{lem:amplitude}
Fix $\Lambda>0$. Under \eqref{eq:sobolev}, a weak eigenfunction
with $p\ge2$ and $\lambda D^p\le\Lambda$ satisfies
\[
 \|u\|_\infty\le C_M\|u\|_p,
 \qquad C_M=C_M(C_S,\nu,p,\Lambda).
\]
In particular, if $\min u=-1$, $\max u=b\le1$, and
$\Lambda=(p-1)\pi^p_p$, then
\begin{equation}\label{eq:amplitude}
 b\ge\beta_0:=(2C_M^p)^{-1/(p-1)}>0.
\end{equation}
\end{lemma}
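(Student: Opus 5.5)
Two parts: a Moser iteration for the $L^\infty$ bound, then a short averaging argument for \eqref{eq:amplitude}.

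\emph{Step 1: $L^\infty$ bound.} Rescale the metric so that $D=1$. Then $\lambda\le\Lambda$, and the Sobolev inequality \eqref{eq:sobolev} reads $\|h\|_s^2\le C_S(\|\nabla h\|_2^2+\|h\|_2^2)$ with $s=2\nu/(\nu-2)>2$.

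For $\beta\ge1$, test the weak equation with $\varphi=|u|^{\beta-1}u$. This gives
\[
\beta\int|u|^{\beta-1}|\nabla u|^p\,d\mu=\lambda\int|u|^{p+\beta-1}\,d\mu .
\]
Set $\gamma=(p+\beta-1)/p$ and $w=|u|^{\gamma}$. Then
\[
|\nabla w|^p=\gamma^p|u|^{\beta-1}|\nabla u|^p,
\qquad\text{so}\qquad
\int|\nabla w|^p\,d\mu\le\frac{\Lambda\gamma^p}{\beta}\int w^p\,d\mu .
\]
We need a Sobolev inequality for $p$-energies. Apply \eqref{eq:sobolev} to $h=w^{p/2}$, using $|\nabla h|^2=\tfrac{p^2}{4}w^{p-2}|\nabla w|^2$ and H\"older with exponents $p/(p-2)$ and $p/2$. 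This yields
\[
\|w\|_{ps/2}^{p}\le C\bigl(\|w\|_p^{p-2}\|\nabla w\|_p^2+\|w\|_p^p\bigr)\le C(1+\Lambda\gamma^p/\beta)^{2/p}\|w\|_p^p .
\]
In terms of $u$, with $\chi=s/2>1$ and $r=p\gamma=p+\beta-1$, this says
\[
\|u\|_{r\chi}\le \bigl(C(1+\Lambda)r^{p}\bigr)^{c/r}\|u\|_{r} .
\]
Iterate with $r_0=p$ and $r_{k+1}=\chi r_k$. The product $\prod_k (C r_k^p)^{c/r_k}$ converges because $\sum k\chi^{-k}<\infty$. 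Hence $\|u\|_\infty\le C_M\|u\|_p$ with $C_M=C_M(C_S,\nu,p,\Lambda)$. This is the routine Moser iteration.

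\emph{Step 2: amplitude bound.} Assume $\min u=-1$ and $\max u=b\le1$. Then $\|u\|_\infty=1$, so Step 1 gives $\int|u|^p\,d\mu\ge C_M^{-p}$. Testing the equation with $1$ gives $\int|u|^{p-2}u\,d\mu=0$, that is, $\int (u_+)^{p-1}=\int (u_-)^{p-1}$.

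On $\{u<0\}$ we have $|u|\le1$, so
\[
\int (u_-)^p\le\int (u_-)^{p-1}=\int (u_+)^{p-1}\le b^{p-1}.
\]
On $\{u>0\}$, $\int (u_+)^p\le b^p\le b^{p-1}$.

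Adding the two pieces gives $C_M^{-p}\le\int|u|^p\le 2b^{p-1}$, so $b\ge(2C_M^p)^{-1/(p-1)}=\beta_0$. This is \eqref{eq:amplitude}.

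The main obstacle is bookkeeping the $p$-homogeneous Moser step, namely converting the $L^2$ Sobolev inequality into a gain for $\|\nabla w\|_p$. The H\"older trick with $h=w^{p/2}$ handles this, since $p\ge2$.
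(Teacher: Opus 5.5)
Your proposal is correct and follows the paper's proof essentially step for step. The paper also tests with $|u|^{r-p}u$ and uses H\"older with exponents $p/(p-2)$ and $p/2$ to turn the $p$-energy into the Dirichlet energy of $|u|^{r/2}$. It then applies \eqref{eq:sobolev}, iterates along $r_j=p\sigma^j$, and combines $\int|u|^p\,d\mu\ge C_M^{-p}$ with $\int u_+^{p-1}\,d\mu=\int u_-^{p-1}\,d\mu$. Your rescaling to $D=1$ and the auxiliary $w=|u|^\gamma$ are only cosmetic. One small point you omit: $|u|^{\beta-1}u$ is a legitimate $W^{1,p}$ test function because $u$ is already bounded by standard regularity, as the paper notes (alternatively, truncate).
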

\begin{proof}
Put $\sigma=\nu/(\nu-2)>1$. By standard regularity, $u$ is bounded. Hence for every $r\geq p$, 
\begin{align*}
    u|u|^{r-p}\in W^{1,p}(M).
\end{align*}
Using this as a test function to \eqref{plaplaceeigen} we get
\[
 (r-p+1)\int |u|^{r-p}|\nabla u|^p\,d\mu
             =\lambda\int |u|^r\,d\mu.
\]
H\"older inequality yields
\[
 \int\bigl|\nabla(|u|^{r/2})\bigr|^2\,d\mu
 \le\frac{r^2}{4}
       \left(\frac{\lambda}{r-p+1}\right)^{2/p}
       \int |u|^r\,d\mu.
\]
For $p=2$ this is an equality. Applying
\eqref{eq:sobolev} and using $\lambda D^p\leq \Lambda$, we get
\begin{equation}\label{eq:moser}
 \|u\|_{\sigma r}\le
 \left\{C_S\left[1+\frac{r^2}{4}
  \left(\frac{\Lambda}{r-p+1}\right)^{2/p}\right]\right\}^{1/r}
 \|u\|_r.
\end{equation}
Running the Moser iteration with $r_j=p\sigma^j$ proves the first assertion.

Now let $u_+=\max\{u,0\}$ and $u_-=\max\{-u,0\}$.  Then
\begin{align*}
    |u|^{p-2}u=u_+^{p-1}-u_-^{p-1}.
\end{align*}
The $p$-mean zero condition implies
\begin{align*}
    \int_Mu_+^{p-1}d\mu=\int_Mu_-^{p-1}d\mu.
\end{align*}
Hence
\begin{align*}
\int_M |u|^{p-1}d\mu=2\int_Mu_+^{p-1}d\mu.
\end{align*}
Since $|u|\leq 1$, we have
\[
 C_M^{-p}\le\int|u|^p\,d\mu
 \le\int|u|^{p-1}\,d\mu
 =2\int u_+^{p-1}\,d\mu\le2b^{p-1}.
\]
This proves \eqref{eq:amplitude}.
\end{proof}

\subsection{An elliptic estimate with drift} Here we prove the key elliptic estimate.  We use the super level-set inequality technique of Stampacchia \cite[Lemma 4.1]{Stampacchia1965}.
Let $A$ be a bounded symmetric endomorphism satisfying $I\leq A \leq a_+I$, $a_+>0$, and let $B$ be a bounded measurable vector field. Consider the elliptic operator
\begin{align*}
    Lu=\diver(A\nabla u)+B\cdot \nabla u
\end{align*}
and the quadratic forms
\begin{align*}
    \Gamma_A(v)=\langle A\nabla v,\nabla v\rangle
\end{align*}
and
\begin{align*}
c_B=|B|^2_{A^{-1}}=\langle A^{-1}B,B\rangle.
\end{align*}
\begin{lemma}\label{mainlemma}
    Assume the Sobolev inequality \eqref{eq:sobolev} holds. Let $q>\frac{\nu}{2}$ and $c_0>0$. Choose $\gamma=s\frac{q-1}{q}-1>1.$
    Suppose $0\leq Y \in H^1(M)$ and $0\leq g\in L^q(M)$ satisfy
    \begin{align*}
        -LY+(c_0+c_B)Y\leq g \quad \text{weakly on }M.
    \end{align*}
    Then
    \begin{align*}
        \|Y\|_\infty \leq 2^{\frac{\gamma}{\gamma -1}}C_ED^2\|g\|_q,
    \end{align*}
    where $C_E=C_S\,\max\{2,\frac{1}{c_0D^2}\}$.
\end{lemma}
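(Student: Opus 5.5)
The plan is a Stampacchia truncation argument. For $k\ge0$ we test the inequality with $(Y-k)_+\in H^1(M)$ and write $A(k)=\{Y>k\}$. The main step is absorbing the drift term. Using $I\le A$ and Cauchy--Schwarz in the $A$-metric,
\[
\int B\cdot\nabla Y\,(Y-k)_+\,d\mu\le \int_{A(k)}|B|_{A^{-1}}\,\Gamma_A(Y)^{1/2}(Y-k)_+\,d\mu\le \tfrac12\int \Gamma_A((Y-k)_+)\,d\mu+\tfrac12\int c_B (Y-k)_+^2\,d\mu .
\]
Here we used $\nabla Y=\nabla (Y-k)_+$ on $A(k)$. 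The $c_B$ term is then dominated by the zeroth-order term, since
\[
\int c_B Y (Y-k)_+\,d\mu\ge \int c_B (Y-k)_+^2\,d\mu ,
\]
which holds because $Y\ge (Y-k)_+\ge0$. The same inequality applied to $c_0$ gives
\[
\tfrac12\int \Gamma_A((Y-k)_+)\,d\mu+c_0\int (Y-k)_+^2\,d\mu\le \int g\,(Y-k)_+\,d\mu .
\]

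Next we pass to the Sobolev inequality. Since $A\ge I$, the left-hand side is at least
\[
\min\{\tfrac12 D^{-2},c_0\}\bigl(D^2\|\nabla (Y-k)_+\|_2^2+\|(Y-k)_+\|_2^2\bigr)\ge \frac{1}{C_E D^2}\,\|(Y-k)_+\|_s^2 ,
\]
with $C_E=C_S\max\{2,1/(c_0D^2)\}$, which is exactly the constant in the statement. On the right we apply H\"older with exponents $q$, $s$, and the conjugate exponent $r$ defined by $1/r=1-1/q-1/s$. This requires $1/q+1/s<1$, which follows from $q>\nu/2$ because $1/s=1/2-1/\nu$. We obtain
\[
\int g\,(Y-k)_+\,d\mu\le \|g\|_q\,\|(Y-k)_+\|_s\,|A(k)|^{1-1/q-1/s},
\]
where $|A(k)|=\mu(A(k))$. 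Dividing through gives
\[
\|(Y-k)_+\|_s\le C_ED^2\|g\|_q\,|A(k)|^{1-1/q-1/s}.
\]

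For $h>k$ we have $(h-k)|A(h)|^{1/s}\le \|(Y-k)_+\|_s$, which yields
\[
|A(h)|\le \left(\frac{C_ED^2\|g\|_q}{h-k}\right)^{s}|A(k)|^{\,s(1-1/q)-1} .
\]
The exponent is $\gamma=s\frac{q-1}{q}-1$, and $\gamma>1$ is equivalent to $1/q+2/s<1$, i.e. $q>\nu/2$ again. Stampacchia's lemma \cite[Lemma 4.1]{Stampacchia1965} then gives $|A(d)|=0$ for
\[
d=C_ED^2\|g\|_q\,|A(0)|^{(\gamma-1)/s}\,2^{\gamma/(\gamma-1)} .
\]
Since $\mu$ is a probability measure, $|A(0)|\le1$, and therefore $\|Y\|_\infty\le 2^{\gamma/(\gamma-1)}C_ED^2\|g\|_q$.

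The step I expect to be the main obstacle is the first one. The drift has no sign or integrability assumption beyond boundedness, so it must be absorbed exactly by the $c_BY$ term with the sharp constant $\tfrac12$. The truncation has to be arranged so that the weak formulation with the nonsymmetric term makes sense; for this, $B$ bounded and $Y\in H^1$ suffice. The remaining care is checking that $\gamma>1$ under $q>\nu/2$ and $n<\nu<2q$.
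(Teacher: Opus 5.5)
Your proposal is correct and follows the paper's proof step for step. You test with $(Y-k)_+$, absorb the drift against the $c_B Y$ term by Cauchy--Schwarz in the $A$-metric, and combine $A\ge I$ with the Sobolev inequality to obtain $C_E$. You then use three-exponent H\"older to reach the level-set inequality with exponent $\gamma$, and conclude by Stampacchia's iteration. The only difference is cosmetic: you cite Stampacchia's Lemma 4.1 as a black box, picking up the harmless factor $\mu(\{Y>0\})^{(\gamma-1)/s}\le 1$, whereas the paper runs the dyadic induction $k_j=d(1-2^{-j})$ explicitly.
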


\begin{proof}
    If $\|Y\|_\infty=0$, there is nothing to show.  For $k\geq 0$, define $E_k=\{Y > k\}$ and a test function $v=(Y-k)_+\in H^1(M)$.  Then $\nabla v= \mathbf{1}_{E_k}\nabla Y$ almost everywhere.  Testing with $v$ and noting that $A$ is symmetric positive definite, we get
    \begin{align*}
        \int_M |A^{\frac{1}{2}}\nabla v|^2-\int_M\langle B,\nabla v\rangle v\,d\mu+\int_M(c_0+ c_B)Yv\,d\mu\leq \int_Mgv\,d\mu.
    \end{align*}
    On $E_k$, $Y=k+v$ so that $Yv\geq v^2$.  By Cauchy-Schwarz, 
    \begin{align*}
        \langle B,\nabla v\rangle v\leq \langle A^{-\frac{1}{2}}B,A^{\frac{1}{2}}\nabla v\rangle v\leq \sqrt{c_B}|A^{\frac{1}{2}}\nabla v|v\leq \frac{1}{2}|A^{\frac{1}{2}}\nabla v|^2+\frac{1}{2}c_Bv^2.
    \end{align*}
    It follows that
    \begin{equation}\label{iterationineq}
    \frac{1}{2}\|A^{\frac{1}{2}}\nabla v\|_2^2+c_0\|v\|_2^2+\frac{1}{2}\int_Mc_Bv^2d\mu\leq \int_Mgv\,d\mu.
    \end{equation}
    Recall $C_E=C_S\,\max\{2,\frac{1}{c_0D^2}\}$. Dropping the $c_B$ term and using \eqref{eq:sobolev} and \eqref{iterationineq}, we have
    \begin{align*}
        \|v\|_s^2&\leq C_S(D^2\|\nabla v\|_2^2+\|v\|_2^2)\\
        &\leq C_ED^2(2^{-1}\|A^{\frac{1}{2}}\nabla v\|_2^2+c_0\|v\|_2^2)\leq C_ED^2\int_Mgv\,d\mu.
    \end{align*}
    By H\"older's inequality,
    \begin{align*}
        \int_Mgv\,d\mu\leq\|g\|_q\|v\|_{q'}\leq\|g\|_q\|v\|_s(\mu(E_k))^{1-\frac{1}{q}-\frac{1}{s}}.
    \end{align*}
    Dividing by $\|v\|_s$ we get
    \begin{align*}
        \|v\|_s\leq T(\mu(E_k))^{1-\frac{1}{q}-\frac{1}{s}}, \quad T:=C_ED^2\|g\|_q.
    \end{align*}
    If $\|v\|_s=0$, the inequality trivially holds.
    For $h>k$, $v\geq h-k$ on $E_h$. Hence
    \begin{equation}\label{measureineq}
       \mu(E_h)\leq \left(\frac{T}{h-k}\right)^s(\mu(E_k))^\gamma, \quad \gamma=s\left(1-\frac{1}{q}\right)-1>1. 
    \end{equation}
    Note $\gamma>1$ is equivalent to $q>\frac{\nu}{2}$.
    If $T=0$, then $Y=0$ and the estimate holds.  Suppose $T>0$, and set
    \begin{align*}
        d=2^{\frac{\gamma}{\gamma-1}}T, \quad k_j=d(1-2^{-j}), \quad l=\frac{s}{\gamma-1}.
    \end{align*}
    We will show by induction that $\mu(E_{k_j})\leq 2^{-jl}$ for all $j\in \mathbb{N}$.  For $j=0$, this is simply $\mu(E_0)\leq \mu(M)=1$.  Suppose the inequality holds for $j$.  Then $k_{j+1}-k_j=d2^{-(j+1)}$ and  \eqref{measureineq} implies
    \begin{align*}
        \mu(E_{k_{j+1}})\leq \left(\frac{T2^{j+1}}{d}\right)^s2^{-jl\gamma}=2^{-(j+1)l}.
    \end{align*}
    Since $E_d\subset E_{k_j}$ for all $j$, let $j\to \infty$ so that $\mu(E_d)=\mu(Y>d)=0$ hence $\|Y\|_\infty\leq d$.
    
\end{proof}

\begin{remark}
    In our application, $c_0=\lambda^{\frac{2}{p}}$.
\end{remark}

\section{Gradient comparison for strict profiles}
\subsection{A differential inequality for the gradient defect}
Here we give the details of the computations to establish the elliptic differential inequality in our particular case.  Let $p\geq 2$ and $u$ solve $-\Delta_pu=\lambda f(u)$ with $f(u)=|u|^{p-2}u$.  

On \(\Omega=\{|\nabla u|>0\}\), the linearization of the \(p\)-Laplacian at \(u\) is given by
\begin{equation}\label{linearized}
P_uv=\operatorname{div}\!\left(|\nabla u|^{p-2}A\nabla v\right),
\qquad
A=I+(p-2)e\otimes e^\flat,
\qquad
e=\frac{\nabla u}{|\nabla u|}.
\end{equation}
While the linearized $p$-Laplacian is strictly elliptic at points where $\nabla u \neq 0$, to gain uniform ellipticity we normalize the principal coefficient before taking divergence and define
\begin{equation}\label{Dop}
\mathcal Dv:=\operatorname{div}(A\nabla v).
\end{equation}
Since \(p\ge2\), we have \(I\le A\le(p-1)I\), and hence, for \(v\in C^\infty_c(\Omega)\),
\[
-\int_\Omega v\mathcal Dv\,d\mu
=\int_\Omega|A^{1/2}\nabla v|^2\,d\mu
\ge\int_\Omega|\nabla v|^2\,d\mu.
\]
Note $A$ is a positive definite symmetric endomorphism and also $\mathcal{D}$ is related to $P_u$ by
\[
\frac1p\mathcal D(|\nabla u|^p)
=P_u\!\left(\frac{|\nabla u|^2}{2}\right).
\]

Let $N\geq n$ and consider a $C^2$ function $z(u)$.  We first define our key functional which we want to estimate
\begin{equation}
    F=|\nabla u|^p-z(u).
\end{equation}
Define
\begin{align*}
    d=\lambda |u|^{p-2}u+\frac{p-1}{p}z', \quad 
    \alpha_N=\frac{(p-1)(N+p-2)}{p(N-1)}.
\end{align*}
Here primes denote the derivatives in the variable $u$. 
Next we define the residual
\begin{equation}\label{residual}
     E_N=(p-1)z'd+\frac{p}{N-1}d^2-pzd'.
\end{equation}
The geometric meaning of $E_N$ is discussed in \ref{residu}. We call a profile $z(u)$ strict when its residual $E_N[z(u)]$ is strictly positive.

Finally, we define our elliptic operator
\begin{align*}
    L_N:=\mathcal{D}+B_N\cdot \nabla ,
\end{align*}
where
\begin{align*}
    B_N=-(p-1)(z'+\frac{2d}{N-1})|\nabla u|^{-p}\nabla u.
\end{align*}
The choice for the drift term $B_N$ will become apparent in the computation below.
\begin{proposition}
Let $u$ be an eigenfunction to \eqref{plaplaceeigen}.  On $\{|\nabla u|>0\}$ we have
    \begin{equation}\label{subineq}
    L_NF \geq |\nabla u|^{2-p}(E_N-pd'F)-p\rho|\nabla u|^p.
\end{equation}
\end{proposition}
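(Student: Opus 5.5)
The plan is to derive \eqref{subineq} from the Bochner formula for the $p$-Laplacian, using the same bookkeeping as in the classical Zhong--Yang/Valtorta gradient comparison. The one new point is that the curvature term is bounded below by $-\rho$ instead of being dropped. Throughout I work on $\Omega=\{|\nabla u|>0\}$, where $u$ is $C^{2,\alpha}$ (actually smooth, since the equation is uniformly elliptic there), and I write $w=|\nabla u|$ and $u_{ee}=\Hess u(e,e)$.

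First I record the pointwise identities.
\begin{itemize}
\item The linearized Bochner identity reads
\[
P_u\!\left(\tfrac{w^2}{2}\right)=w^{p-2}\bigl(|\Hess u|^2+(p-2)|\nabla w|^2+\Ric(\nabla u,\nabla u)\bigr)+\langle\nabla u,\nabla\Delta_pu\rangle-(p-2)\,w^{-2}\,\Delta_pu\,\langle\text{correction}\rangle.
\]
The correction term is the standard one coming from differentiating $w^{p-2}$. I will verify it by a direct computation in an orthonormal frame with $e_1=e$.
\item Combining with $\frac1p\mathcal D(w^p)=P_u(w^2/2)$, this expresses $\mathcal D(w^p)$ through the Hessian.
\item The eigenvalue equation gives $\Delta_pu=-\lambda f(u)$, hence $\langle\nabla u,\nabla\Delta_pu\rangle=-\lambda f'(u)w^2$.
\item The equation also gives the splitting $w^{p-2}\bigl((p-1)u_{ee}+\textstyle\sum_{i\ge2}u_{ii}\bigr)=-\lambda f$.
\item For the curvature term I use $\Ric(\nabla u,\nabla u)\ge -\rho w^2$, which after multiplying by $p\,w^{p-2}$ and normalizing produces exactly the error $-p\rho w^p$.
\end{itemize}

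Second, I bound the Hessian from below by a refined Kato/Cauchy--Schwarz argument:
\[
|\Hess u|^2\ge u_{ee}^2+2\sum_{i\ge2}u_{1i}^2+\frac{1}{n-1}\Bigl(\sum_{i\ge2}u_{ii}\Bigr)^2 .
\]
Since $N\ge n$, the last term is at least $\frac{1}{N-1}(\cdot)^2$. The sum $\sum_{i\ge2}u_{ii}$ is replaced, via the equation, by $-\lambda f w^{2-p}-(p-1)u_{ee}$. The mixed terms $u_{1i}$, $i\ge 2$, control $|\nabla w|^2-u_{ee}^2$, which absorbs the $(p-2)|\nabla w|^2$ term with the right sign.

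Third, I eliminate $u_{ee}$ in favour of $F$ using
\[
\nabla(w^p)=\nabla F+z'\nabla u,\qquad p\,w^{p-1}u_{ee}=\langle\nabla F,e\rangle+z'w .
\]
I also expand $\mathcal D z(u)=(p-1)z''w^2+z'\mathcal Du$ and rewrite $\mathcal Du$ through the equation. After these substitutions, $\mathcal DF$ is bounded below by a quadratic polynomial in the variable $X=\langle\nabla F,\nabla u\rangle w^{-p}$ plus terms free of $\nabla F$. The linear part in $X$ is exactly what the drift $B_N\cdot\nabla F$ is designed to cancel, and this is how the coefficient $-(p-1)\bigl(z'+\frac{2d}{N-1}\bigr)$ in $B_N$ arises. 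The pure quadratic part in $X$ has a positive coefficient (a multiple of $\alpha_N$) and is discarded.

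In the remaining zero-order terms I substitute $w^p=F+z$. Collecting the terms independent of $F$ should give $w^{2-p}E_N$; the combination $d=\lambda f+\frac{p-1}{p}z'$ appears naturally after completing squares in $u_{ee}$. The terms linear in $F$ should collapse to $-p\,d'F\,w^{2-p}$, using $d'=\lambda f'+\frac{p-1}{p}z''$. I expect this final algebraic collection to be the main obstacle. To keep it tractable, I will first carry out the computation in the model case $F\equiv0$, where the inequality must reduce to the one-dimensional ODE identity defining $E_N$. I will then track the $F$-dependence separately, checking that no $F^2$ term survives or that any such term has a favourable sign.
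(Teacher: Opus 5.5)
Your proposal is correct and follows the paper's proof essentially step for step. The shared steps are: the Bochner identity for $\mathcal D(|\nabla u|^p)=pP_u(|\nabla u|^2/2)$; the splitting $|\Hess u|^2+(p-2)|\nabla|\nabla u||^2=(p-1)H_e^2+p\sum_{i\ge2}u_{1i}^2+|H_\perp|^2$, followed by Cauchy--Schwarz on $H_\perp$ using $N\ge n$; elimination of $H_e$ via $p|\nabla u|^{p-1}H_e=\langle e,\nabla F\rangle+z'|\nabla u|$; cancellation of the linear term by $B_N$; and discarding the nonnegative $\alpha_N$ and mixed-derivative terms.

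Three small corrections to your plan:
\begin{itemize}
\item The Bochner identity has no correction term. Exactly, $P_u(\tfrac12|\nabla u|^2)=|\nabla u|^{p-2}\bigl(|\Hess u|^2+(p-2)|\nabla|\nabla u||^2+\Ric(\nabla u,\nabla u)\bigr)+\langle\nabla u,\nabla\Delta_pu\rangle$.
\item The final collection is an exact identity with no $F^2$ term. You substitute $|\nabla u|^p=F+z$ only in $-pd'|\nabla u|^2$, so the model-case detour is unnecessary.
\item Near $\{u=0\}$ the solution is in general only $C^{3,\alpha}$, not smooth. This is because $|u|^{p-2}u$ is only finitely differentiable at $0$ unless $p$ is an even integer. $C^{3,\alpha}$ is all the Bochner formula needs.
\end{itemize}
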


\begin{proof}
 Using the identity
 \begin{align*}
     X(|\nabla u|)&=\frac{1}{2|\nabla u|}X\langle \nabla u,\nabla u\rangle =\Hess u(X,e)
 \end{align*}
for any vector $X$, we have
\begin{align*}
    X(F)=p|\nabla u|^{p-1}\Hess u(e,X)-z'|\nabla u|\langle e,X\rangle. 
\end{align*}
so that
\begin{align*}
    \Hess u(e,X) = \frac{X(F)}{p|\nabla u|^{p-1}}+\frac{z'}{p}|\nabla u|^{2-p}\langle e,X\rangle.
\end{align*}
Let $X=e$ so that
\begin{align*}
    H_e =\frac{z'}{p}|\nabla u|^{2-p}+\frac{\langle e,\nabla F\rangle}{p|\nabla u|^{p-1}}.
\end{align*}
Write $\nabla_\perp F=\nabla F-\langle e,\nabla F\rangle e.$ Taking $X\perp e$ gives
\begin{align*}
    \Hess u(e,X) = \frac{\langle \nabla_\perp F,X\rangle}{p|\nabla u|^{p-1}}.
\end{align*}
Let $e,E_2,\ldots, E_n$ be an orthonormal frame.  Then
\begin{align*}
    |\Hess u|^2&=H_e^2+2\sum_{i=2}^n\Hess u(e,E_i)^2+|H_\perp|^2\\
    &=H_e^2+\frac{2}{p^2}|\nabla u|^{2-2p}|\nabla_\perp F|^2+|H_\perp|^2.
\end{align*}
We also have
\begin{align*}
    |\nabla |\nabla u||^2 &= |\Hess u(\cdot,e)|^2= H_e^2+\frac{1}{p^2}|\nabla u|^{2-2p}|\nabla_\perp F|^2.
\end{align*}

For $H=\Hess u$, $H_e=\Hess u(e,e)$ and $H_\perp$ the Hessian restricted to the directions orthogonal to $e$.  By Cauchy-Schwarz and $N\geq n$, the remainder
\begin{align*}
R_N:=|H_\perp|^2-\frac{(\tr H_\perp)^2}{N-1} \geq 0.
\end{align*}
The $p$-eigenvalue equation \eqref{plaplaceeigen} rewritten in terms of these Hessians is
\begin{align*}
    (p-1)H_e+\tr H_\perp =-\lambda f(u)|\nabla u|^{2-p}
\end{align*}
Also the Hessian decomposes as
\begin{align*}
|H|^2+(p-2)|\nabla |\nabla u||^2&=(p-1)H^2_e+\frac{1}{p}|\nabla u|^{2-2p}|\nabla_\perp F|^2+|H_\perp|^2 \\
&=(p-1)H_e^2+\frac{1}{p}|\nabla u|^{2-2p}|\nabla_\perp F|^2+\frac{(\lambda f(u)|\nabla u|^{2-p}+(p-1)H_e)^2}{N-1}+R_N
\end{align*}

By direct computation,
\begin{align*}
    \langle\nabla F,e\rangle  &= p|\nabla u|^{p-2} \Hess u (\nabla u, e)-z'(u)\langle \nabla u,e\rangle\\
    &=p|\nabla u|^{p-2}|\nabla u| H_e-z'(u)|\nabla u|
\end{align*}
and
\begin{align*}
    |\nabla_\perp F|^2 &= |\nabla F|^2-(\langle \nabla F,e\rangle)^2
\end{align*}
By Bochner formula (See for instance \cite{Valtorta2012}),
\begin{equation}\label{eq:divergence-bochner}
    \mathcal{D}(|\nabla u|^p)= p|\nabla u|^{p-2}(|H|^2+(p-2)|\nabla |\nabla u||^2)-p\lambda f'(u)|\nabla u|^2+p|\nabla u|^{p-2}\Ric(\nabla u,\nabla u).
\end{equation}
Combining with the Hessian decomposition,
\begin{align*}
    \mathcal{D}(|\nabla u|^p)&=p|\nabla u|^{p-2}\left((p-1)H_e^2+\frac{1}{p}|\nabla u|^{2-2p}|\nabla_\perp F|^2+\frac{(\lambda f(u)|\nabla u|^{2-p}+(p-1)H_e)^2}{N-1}+R_N\right) \\
    &\hsp -p\lambda f'(u)|\nabla u|^2+p|\nabla u|^{p-2}\Ric(\nabla u,\nabla u).
\end{align*}
Next noting that $A\nabla u=(p-1)\nabla u$ and $\Delta u=-\lambda f(u)|\nabla u|^{2-p}-(p-2)H_e$,
\begin{align*}
    \mathcal{D}(z(u))&=(p-1)(z''|\nabla u|^2+z'\Delta u)\\
    &=(p-1)(z''|\nabla u|^2-\lambda f(u)z'|\nabla u|^{2-p}-(p-2)z'H_e).
\end{align*}
Computing out each term, we have
\begin{align*}
    p(p-1)|\nabla u|^{p-2}H_e^2=\frac{p-1}{p}(z')^2|\nabla u|^{2-p}+\frac{2(p-1)}{p}z'|\nabla u|^{1-p}\langle e,\nabla F\rangle +\frac{p-1}{p}|\nabla u|^{-p}(\langle e,\nabla F\rangle)^2.
\end{align*}
and using $\lambda f(u)|\nabla u|^{2-p}+(p-1)H_e=d|\nabla u|^{2-p}+\frac{p-1}{p}\langle e,\nabla F\rangle|\nabla u|^{1-p}$,
\begin{align*}
    \frac{p|\nabla u|^{p-2}}{N-1}&(\lambda f(u)|\nabla u|^{2-p}+(p-1)H_e)^2\\
    &= \frac{p}{N-1}d^2|\nabla u|^{2-p}+\frac{2(p-1)}{N-1}d|\nabla u|^{1-p}\langle e,\nabla F\rangle+\frac{(p-1)^2}{p(N-1)}|\nabla u|^{-p}(\langle e,\nabla F\rangle)^2.
\end{align*}
and
\begin{align*}
    (p-1)(p-2)z'H_e&=\frac{(p-1)(p-2)}{p}(z')^2|\nabla u|^{2-p}+\frac{(p-1)(p-2)}{p}z'|\nabla u|^{1-p}\langle e,\nabla F\rangle.
\end{align*}
The terms independent of $\nabla F$ combine as
\begin{align*}
    &|\nabla u|^{2-p}\left(\frac{p-1}{p}(z')^2+\frac{(p-1)(p-2)}{p}(z')^2+(p-1)\lambda f(u)z'+\frac{p}{N-1}d^2\right)-(p\lambda f'(u)+(p-1)z'')|\nabla u|^2\\
    &=|\nabla u|^{2-p}\left((p-1)z'd+\frac{p}{N-1}d^2-pd'|\nabla u|^p\right)\\
    &=|\nabla u|^{2-p}(E_N-pd'F).
\end{align*}
Hence
\begin{align*}
    \mathcal{D}F&= |\nabla u|^{2-p}(E_N-pd'F)\\
    &\hsp+(p-1)\left(z'+\frac{2d}{N-1}\right)|\nabla u|^{-p}\langle \nabla u,\nabla F\rangle\\
    &\hsp+|\nabla u|^{-p}(|\nabla_\perp F|^2+\alpha_N(\langle e,\nabla F\rangle)^2)\\
    &\hsp+p|\nabla u|^{p-2}R_N+p|\nabla u|^{p-2}\Ric(\nabla u,\nabla u).
\end{align*}
Moving the first order term, we can rewrite as a drift operator so that
\begin{align*}
    L_NF&=|\nabla u|^{2-p}(E_N-pd'F)\\
    &\hsp+|\nabla u|^{-p}(|\nabla_\perp F|^2+\alpha_N(\langle e,\nabla F\rangle)^2)\\
    &\hsp+p|\nabla u|^{p-2}R_N+p|\nabla u|^{p-2}\Ric(\nabla u,\nabla u).
\end{align*}
Dropping the nonnegative terms and using the lower bound for $\Ric$ gives
\begin{equation}
    L_NF \geq |\nabla u|^{2-p}(E_N-pd'F)-p\rho|\nabla u|^p.
\end{equation}
\end{proof}

\subsection{Gradient comparison by truncation}
Next we will show that if the residual $E_N$ is positive, then we obtain a gradient estimate.  Make the following notations
    \begin{align*}
        z_{\min}:=\min_J z>0, \quad z_{\max}:= \max_J z, \quad M_{d'}:=\max_J|d'|,\quad M_B:=\max_J\left|z'+\frac{2d}{N-1}\right|,
    \end{align*}
    and define
    \begin{align*}
        \beta=(p-1)M_B^2\left(\frac{z_{\min}}{2}\right)^{\frac{2-2p}{p}}, \quad \alpha = \frac{E_{\min}}{2}(z_{\max})^{\frac{2-p}{p}}>0.
    \end{align*}
\begin{proposition}\label{gradestprof}
    Fix $\lambda>0$ and assume the Sobolev inequality \eqref{eq:sobolev} holds with constant $C_S$.  Let $J$ be a compact interval.  Let $u$ solve \eqref{plaplaceeigen} at $\lambda$.  Suppose $u(M)\subset J$, $z>0$ is $C^2$ on a neighborhood of $J$, and 
    \begin{equation*}
        E_{\min}:=\min_J E_N>0.
        \end{equation*}
        Then
    \begin{align*}
        |\nabla u|^p \leq z(u)
    \end{align*}
    whenever $\kappa < \frac{\eps_0}{p2^{\frac{\gamma}{\gamma-1}}C_Ez_{\max}}$, where
    \begin{equation*}
    \eps_0 = \min\left\{ \frac{z_{\min}}{4},\frac{E_{\min}}{2p(\lambda+M_{d'})}, \frac{\alpha}{\lambda^{2/p}+\beta}\right\}
\end{equation*}
and $C_E=C_S\,\max\left\{2,\frac{1}{\lambda^{\frac{2}{p}}D^2}\right\}$.
\end{proposition}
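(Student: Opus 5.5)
Apply Lemma \ref{mainlemma} to the positive part of a shifted gradient defect, so that the sign trouble near $\nabla u=0$ and the indefinite term $-pd'F$ are both kept under control. Set
\[
Y=(F-\eta)_+,\qquad F=|\nabla u|^p-z(u),
\]
with a threshold $\eta\in(0,z_{\min}/4]$ to be fixed. Three observations make this work.

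\emph{Support.} On $\{Y>0\}$ we have $|\nabla u|^p>z(u)+\eta\ge z_{\min}$. So $Y$ vanishes near the critical set, and there \eqref{subineq} holds classically. Moreover $Y\in H^1(M)$, since $u\in C^{1,\alpha}$ and $u$ is smooth away from critical points. I will also want an a priori upper bound on $|\nabla u|^p$ on $\{Y>0\}$ so that $F$ is bounded there. For this I will argue on the set $\{z_{\min}/2\le|\nabla u|^p\le 2z_{\max}\}$ and then use a continuity argument, or else simply use that $Y$ is bounded and the estimate is a priori.

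\emph{Rewriting \eqref{subineq} in the form of Lemma \ref{mainlemma}.} Take $A=I+(p-2)e\otimes e^\flat$, so $I\le A\le(p-1)I$, and $B=B_N$. Then
\[
c_B\le|B_N|^2\le(p-1)^2M_B^2|\nabla u|^{2-2p}\le(p-1)^2M_B^2(z_{\min}/2)^{(2-2p)/p}
\]
on the support, which is of the size $\beta$ (up to the factor $p-1$). For the reaction term, on $\{F>\eta\}$ with $F$ small compared to $E_{\min}/(p(\lambda+M_{d'}))$, which is the second entry of $\eps_0$, we get $E_N-pd'F\ge E_{\min}/2$. Since $|\nabla u|^{2-p}\ge (z_{\max}+F)^{(2-p)/p}$, this gives $|\nabla u|^{2-p}(E_N-pd'F)\ge\alpha$, after adjusting constants. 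The curvature term satisfies
\[
p\rho|\nabla u|^p\le p\rho(z_{\max}+Y+\eta),
\]
of order $p z_{\max}\rho$. Putting these together, Kato's inequality for $(F-\eta)_+$ yields, weakly,
\[
-L_NY+(\lambda^{2/p}+c_B)Y\le g:=p\,z_{\max}'\rho+\big(\lambda^{2/p}+\beta\big)Y-\alpha\,\mathbf 1_{\{Y>0\}}.
\]
If $Y\le \alpha/(\lambda^{2/p}+\beta)$, the last two terms combine to something $\le0$, so effectively $g=p z_{\max}'\rho$. This is where the third entry of $\eps_0$ enters.

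\emph{Closing the argument.} The difficulty is that the reaction-term bound needed smallness of $F$ itself. I would close this by a continuity (bootstrap) argument. Consider the one-parameter family of profiles $z_t=z+t$, or equivalently of thresholds. For large $t$ the conclusion $|\nabla u|^p\le z_t(u)$ holds trivially, because $u$ is $C^1$ on a compact manifold. Let $t_*$ be the infimum of the admissible $t$. At $t_*$, Lemma \ref{mainlemma} gives
\[
\|Y\|_\infty\le 2^{\gamma/(\gamma-1)}C_ED^2\,p\,z_{\max}\|\rho\|_q=p2^{\gamma/(\gamma-1)}C_Ez_{\max}\,\kappa<\eps_0,
\]
with $c_0=\lambda^{2/p}$. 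This is a strictly better bound than the one assumed, so the defect can be pushed below $\eps_0$ for slightly smaller $t$ as well. Hence $t_*\le0$. Finally, let $\eta\downarrow0$, or run the same argument once more with the improved a priori bound $\sup F\le\eps_0$, to remove the threshold and obtain $|\nabla u|^p\le z(u)$ up to an error $\le\eps_0$. To conclude exactly, I would use the strict margin: with $F\le\eps_0$ everywhere, the inequality on $\{F>0\}$ has $g\le p z_{\max}\rho-\alpha$ where $Y>0$.

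\textbf{Expected main obstacle.} The step I expect to be hardest is precisely this last point: upgrading ``$F\le C\kappa$'' to ``$F\le0$''. Lemma \ref{mainlemma} only yields smallness, not the sign. The tool is the strict residual $\alpha>0$, which must absorb the $\rho$ contribution. But $\rho$ is only in $L^q$, not small pointwise, so absorption cannot happen pointwise. I would first try the choice $\eta=\eps_0$ directly, interpreting the conclusion as $|\nabla u|^p\le z(u)+O(\kappa)$, and then apply it to the slightly lowered profile $z-\eps_0$. That profile is still strict for small $\eps_0$, by continuity of $E_N$ in $z$, and this yields the stated inequality for $z$.
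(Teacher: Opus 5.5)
There is a genuine gap: you never get a priori control of $F$ on the support of your truncation. Every bound you need on $\{Y>0\}$ presupposes that $F$ is already of size at most $\eps_0$ there. This includes $E_N-pd'F\ge E_{\min}/2$, $|\nabla u|^{2-p}\ge z_{\max}^{(2-p)/p}$, $p\rho|\nabla u|^p\le p z_{\max}\rho$, and $Y\le\alpha/(\lambda^{2/p}+\beta)$, the last being what lets $\alpha$ absorb the $c_0$ and $c_B$ terms. With $Y=(F-\eta)_+$ you have no such bound.

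Your continuity argument in the additive shift $z_t=z+t$ does not supply it. The shift changes the residual to $E_N[z+t]=E_N[z]-ptd'$, which need not be positive for large $t$, and $z_{\max}$ and $\alpha$ deteriorate as $t$ grows. Your $t_*$ is simply $\max_M F$, which may be large, and at such a $t_*$ Lemma~\ref{mainlemma} cannot be invoked with the constants of $z$. So ``hence $t_*\le 0$'' does not follow. The same defect undermines the final step with the lowered profile $z-\eps_0$. That step would also produce a $\kappa$-threshold different from the stated one, as does your $z'_{\max}$ in the curvature term. A minor point: $c_B$ equals $(p-1)\bigl(z'+\tfrac{2d}{N-1}\bigr)^2|\nabla u|^{2-2p}\le\beta$ exactly, because $B_N\parallel\nabla u$ and $A^{-1}\nabla u=(p-1)^{-1}\nabla u$. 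Your bound $c_B\le|B_N|^2$ loses the factor $p-1$.

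The missing idea is to vary the solution, not the profile, using homogeneity. For $c\ge1$, $v_c=u/c$ solves $-\Delta_p v_c=\lambda f(v_c)$ at the same $\lambda$, and $v_c(M)\subset J$ because $0\in J$. Moreover $F_c=|\nabla v_c|^p-z(v_c)\to -z(0)<0$ uniformly as $c\to\infty$. If $\max_M F_1>0$, continuity in $c$ gives $\bar c>1$ with $\max_M F_{\bar c}=0$.

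For this $v=u/\bar c$ you have $F\le0$ everywhere. So take the threshold below zero, $Y=(F+\eps_0)_+$. Then $0\le Y\le\eps_0$ and $\|Y\|_\infty=\eps_0$ exactly. On $\{Y>0\}$, all the bounds above hold with the constants of the fixed profile $z$, since $-\eps_0<F\le0$ there. Lemma~\ref{mainlemma} then gives $\eps_0\le p2^{\gamma/(\gamma-1)}C_E z_{\max}\kappa$, a contradiction.

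This also dissolves your ``main obstacle''. You never need to upgrade $F\le C\kappa$ to $F\le 0$. The contradiction comes from the exact value $\|Y\|_\infty=\eps_0$ at the touching parameter against the lemma's bound, which is strictly smaller than $\eps_0$.
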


\begin{proof}
Choose
\begin{equation*}
    \eps_0 = \min\left\{ \frac{z_{\min}}{4},\frac{E_{\min}}{2p(\lambda+M_{d'})}, \frac{\alpha}{\lambda^{2/p}+\beta}\right\}
\end{equation*}
Suppose $\displaystyle \max_M(|\nabla u|^p-z(u))>0$.   For $c\geq 1$, let
\begin{align*}
    v_c=\frac{u}{c}, \quad F_c=|\nabla v_c|^p-z(v_c).
\end{align*}
Since $c\geq 1$, $v_c(M)\subset J$.   Also $-\Delta_pv_c=\lambda f(v_c)$.  Since $u\in C^{1,\alpha}(M)$, $\displaystyle \max_M F_c$ is continuous in $c$ and by compactness, $F_c\to -z(0)<0$ uniformly as $c\to \infty$.  Hence there is a $\bar{c}>1$ with $\max_M F_{\bar{c}}=0$.  Set $v=\frac{u}{\bar{c}}$ and by some abuse of notation, we have
\begin{align*}
    F=|\nabla v|^p-z(v)\leq 0, \quad \max_M F=0, \quad |\nabla v|^p\leq z(v)\leq z_{\max}.
\end{align*}
Set
\begin{align*}
    Y=(F+\eps_0)_+.
\end{align*}
Then $0\leq Y \leq \eps_0$ and $\|Y\|_\infty = \eps_0$.  On $\{Y>0\}$, 
\begin{align*}
    -\eps_0 < F \leq 0, \quad z_{\min}-\eps_0<|\nabla v|^p \leq z_{\max}.
\end{align*}
Since $\eps_0\leq \frac{E_{\min}}{2p(\lambda+M_{d'})}$,
we have
\begin{align*}
    pM_{d'}\eps_0 \leq \frac{E_{\min}}{2}\frac{M_{d'}}{\lambda+M_{d'}}\leq \frac{E_{\min}}{2},
\end{align*}
so that
\begin{align*}
    E_N-pd'F\geq E_{\min}-pM_{d'}\eps_0\geq \frac{E_{\min}}{2}.
\end{align*}
Since $p\geq 2$, we have $|\nabla v|^{2-p}\geq(z_{\max})^{\frac{(2-p)}{p}}.$
By \eqref{subineq}, 
\begin{align*}
    L_NF \geq \alpha-p\rho z_{\max},\quad \text{ on }\{Y>0\}.
\end{align*}
Note on $\{Y>0\}$, we have
\begin{align*}
    |\nabla v|^p=z+F>z-\eps_0\geq z-\frac{z_{\min}}{4}>\frac{1}{2}z.
\end{align*}
Let $\Omega=\{|\nabla v|^p > z(v)/2\}\subset M$. For $|\nabla v|>0$, we have $A^{-1}\nabla v=(p-1)^{-1}\nabla v$, hence for
\begin{align*}
    B_N=-(p-1)\left(z'+\frac{2d}{N-1}\right)|\nabla v|^{-p}\nabla v,
\end{align*}

\begin{align*}
    c_B=&|B_N|_{A^{-1}}^2=\langle A^{-1}B_N,B_N\rangle\\
    &=(p-1)\left(z'+\frac{2d}{N-1}\right)^2|\nabla v|^{2-2p}\leq \beta.
\end{align*}
Now using $0\leq Y \leq \eps_0 \leq \frac{\alpha}{\lambda^{2/p}+\beta}$,
\begin{align*}
    (\lambda^{2/p}+c_B)Y\leq \alpha.
\end{align*}
Writing $F_{\eps_0}=F+\eps_0$, we get
\begin{equation}
    -L_NF_{\eps_0}+(\lambda^{2/p}+c_B)F_{\eps_0}\leq p\rho z_{\max}\quad \text{ on }\{F_{\eps_0}>0\}.
\end{equation}

Since $z_{\min}>0$, we have $\overline{\Omega}\subset \{|\nabla v|^p\geq \frac{z_{\min}}{2}\}\subset \{|\nabla v|>0\}$. Hence $|\nabla v|$ is bounded away from zero on a neighborhood of $\overline{\Omega}$.  By standard regularity, see for instance \cite{Valtorta2012}, $v$ is smooth when $v\neq 0$ and $\nabla v\neq 0$ and $v\in C^{3,\alpha}(U)$ for $p>2$, $v(x)=0$ and $\nabla v(x)\neq 0$.  Since $\supp(Y)\Subset \Omega$, we have
\begin{align*}
    Y\in W^{1,\infty}(M),\quad \nabla Y=\mathbf{1}_{\{F_{\eps_0}>0\}}\nabla F \text{ a.e. on } \Omega.
\end{align*}
To obtain a global weak inequality, we extend our operator coefficients by
\begin{align*}
    (\tilde{A},\tilde{B}_N) = \begin{cases}
        (A,B_N), & x\in\Omega\\
        (I,0), &x \not\in\Omega.
    \end{cases}
\end{align*}
Set
\begin{align*}
    \tilde{L}_Nu=\diver(\tilde{A}\nabla u)+\langle \tilde{B}_N,\nabla u\rangle, 
\end{align*}
Let $\chi(t)$ be a smooth and non decreasing cut-off function, zero on $t\leq 0$ and one on $t\geq 1$.  For $\sigma>0$ let $\chi_\sigma=\chi(F_{\eps_0}/\sigma)$. Then for a nonnegative test function $\varphi \in C^\infty(M)$, we have
\begin{align*}
    &\int_\Omega \chi_\sigma\langle A\nabla F,\nabla \varphi\rangle+\frac{1}{\sigma}\int_\Omega \chi'(F_{\eps_0}/\sigma)|A^{1/2}\nabla F|^2\varphi\\
    &-\int_\Omega \chi_\sigma\langle B_N,\nabla F\rangle \varphi+\int_\Omega(\lambda^{2/p}+c_B)F_{\eps_0}\chi_\sigma \varphi \\
    &\leq pz_{\max}\int_\Omega\rho \chi_\sigma\varphi.
\end{align*}
Dropping the second nonnegative term, then taking $\sigma\to 0$, we have
\begin{align*}
    \int_M\langle \tilde{A}\nabla Y,\nabla \varphi\rangle-\int_M\langle \tilde{B}_N,\nabla Y\rangle\varphi+\int_M(\lambda^{2/p}+\tilde{c}_B)Y\varphi \leq pz_{\max}\int_M\rho\varphi.
\end{align*}
Here $\tilde{c}_B:=\langle \tilde{A}^{-1}\tilde{B}_N,\tilde{B}_N\rangle$.  Hence we have
\begin{align*}
    -\tilde{L}_NY+(\lambda^{2/p}+\tilde{c}_B)Y\leq pz_{\max}\rho \quad \text{ weakly on }M.
\end{align*}
Applying Lemma \ref{mainlemma}, we get
\begin{align*}
    \eps_0=\|Y\|_\infty\leq p2^{\frac{\gamma}{\gamma-1}}C_Ez_{\max}\kappa
\end{align*}
contradicting $\kappa < \frac{\eps_0}{p2^{\frac{\gamma}{\gamma-1}}C_Ez_{\max}}.$
\end{proof}

\begin{remark}\label{rescaling}
    We note the scaling relations here. Let $a=\frac{\tilde\lambda}{\lambda}$.  Suppose that $\displaystyle\frac{\tilde{z}}{\tilde{\lambda}}=\frac{z}{\lambda}$.  Then $\tilde{z}=az$.  Then $\tilde{d}=ad$, where $\tilde{d}$ decoration means all relevant terms are now scaled by $a$. Also $\tilde{E}_N[\tilde{z}]=a^2E_N[z]$.  Next we have $\tilde{\alpha}=a^{1+\frac{2}{p}}\alpha$ and $\tilde{\beta}=a^{\frac{2}{p}}\beta$. Combining these together, we get $\tilde{\eps}_0=a\eps_0$.  Hence $\frac{\tilde{\eps}_0}{\tilde{z}_{\max}}=\frac{\eps_0}{z_{\max}}$. If the manifolds satisfy \eqref{eq:sobolev} and \eqref{eq:poincare} with common constants $C_S$ and $C_P$, and  $\displaystyle\frac{\tilde{z}}{\tilde{\lambda}}=\frac{z}{\lambda}$ on the same interval $J$, then, for fixed $N$, the gradient comparison admits a common $\kappa$ threshold.
\end{remark}

\section{One-dimensional comparison profiles}
\subsection{Zero-curvature models and range matching}\label{residu}
Our one-dimensional model consists of $T(t)$ and $w(t)$ satisfying
\begin{align*}
    \begin{cases}
        (p-1)\dot{w}^{p-2}\ddot{w}-T\dot{w}^{p-1}+\lambda f(w)=0\\
        \dot{T}=\frac{T^2}{N-1}.
    \end{cases} 
\end{align*}
On any interval where $\dot{w}>0$, define
\begin{align*}
    z_0(u)=(\dot{w}(w^{-1}(u)))^p.
\end{align*}
Then 
\begin{align*}
    z'_0(w(t))=p\dot{w}^{p-2}\ddot{w}.
\end{align*}
Then 
\begin{align*}
    E_N[z_0](w(t))=p\dot{w}^{2p-2}\left(\frac{T^2}{N-1}-\dot{T}\right)
\end{align*}
The exact zero-curvature model $z_0$ satisfies $\frac{T^2}{N-1}-\dot{T}=0$ hence $E_N[z_0]=0$, so it does not satisfy the strict positivity hypothesis of Proposition \ref{gradestprof}. 

We now introduce the radial profile. For $a\geq 0$ and $T(t)=-\frac{N-1}{t},$
\begin{equation}
    \begin{cases}
        \frac{d}{dt}(|\dot{w}|^{p-2}\dot{w})+\frac{N-1}{t}|\dot{w}|^{p-2}\dot{w}+\lambda|w|^{p-2}w=0,\\
        w(a)=-1,\\
        \dot{w}(a)=0.
    \end{cases}
\end{equation}
When $a=0$, we take the solution that is $C^1$ up to the origin, with $w(0)=-1$, solving
\begin{align*}
    t^{N-1}|\dot{w}(t)|^{p-2}\dot{w}(t)=-\lambda\int_0^ts^{N-1}|w(s)|^{p-2}w(s)ds.
\end{align*}
Let $b:=b_{N,\lambda}(a)$ be the first zero of $\dot{w}$ after $a$ and set $m_{N,\lambda}(a)=w(b_{N,\lambda}(a)),$ and $\ell_{N,\lambda}(a)=b_{N,\lambda}(a)-a$.  On $[w(a),w(b)]$, let
\begin{align*}
    t_{N,a,\lambda}(r)=w^{-1}(r), \quad z_{N,a,\lambda}(r)=(\dot{w}(t_{N,a,\lambda}(r)))^p, \quad -1 < r < m_{N,\lambda}(a).
\end{align*}
Theorems 5.1 and 5.3, Corollary 5.4 of \cite{Valtorta2012}  give existence and uniqueness of these solutions,  a finite first positive maximum, continuity of $a\mapsto m_{N,\lambda}(a)$ and
\begin{align*}
    0<m_{N,\lambda}(a)<1, \quad \lim_{a\to \infty}m_{N,\lambda}(a)= 1, \quad \ell_{N,\lambda}(a)\geq \lambda^{-1/p}(p-1)^{1/p}\pi_p.  
\end{align*}

We also have that before the first zero of $w$, uniformly for small $a\geq 0$ at fixed $\lambda$,
\begin{align*}
    \dot{w}(t)^{p-1}&=\lambda t^{1-N}\int_a^ts^{N-1}(-w(s))^{p-1}ds \leq \frac{\lambda t}{N}
\end{align*}
and
\begin{align*}
    w(t)+1\leq \frac{p-1}{p}\left(\frac{\lambda}{N}\right)^{\frac{1}{p-1}}t^{\frac{p}{p-1}}.
\end{align*}

\begin{lemma}
    For every integer $N\geq 2$ and every $\lambda >0$,
    \begin{align*}
        m_{N,\lambda}(0)^p\leq \frac{1+\log(N)}{N}.
    \end{align*}
\end{lemma}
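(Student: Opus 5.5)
By the homogeneity of the equation, $W(t)=w(\lambda^{-1/p}t)$ solves the same problem with $\lambda=1$, so $m_{N,\lambda}(0)$ does not depend on $\lambda$, and I will take $\lambda=1$. The plan is a Pohozaev-type identity for the radial equation, combined with the monotone energy
\[
H(t)=\tfrac{p-1}{p}|\dot w|^p+\tfrac1p|w|^p .
\]
Using $\frac{d}{dt}\bigl(|\dot w|^{p-2}\dot w\bigr)=-\frac{N-1}{t}|\dot w|^{p-2}\dot w-|w|^{p-2}w$, one gets $\dot H=-\frac{N-1}{t}|\dot w|^p\le0$. Put $\phi=|\dot w|^{p-2}\dot w$ and $c=\frac{N-p}{p}$. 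The equation gives $\frac{d}{dt}(t^{N-1}\phi)=-t^{N-1}|w|^{p-2}w$. I will check that
\[
\frac{d}{dt}\Bigl[t^NH+c\,t^{N-1}w\phi\Bigr]=t^{N-1}|w|^p .
\]
The coefficient of $t^{N-1}|\dot w|^p$ is $\frac{N(p-1)}{p}-(N-1)+c=0$, and the coefficient of $t^{N-1}|w|^p$ is $\frac Np-c=1$. The bracket vanishes at $t=0$, since $\dot w(0)=0$ and $w$ is $C^1$ up to $0$.

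Integrating from $0$ to $t$ gives the key identity
\[
t^NH(t)+c\,t^{N-1}w(t)\phi(t)=\int_0^ts^{N-1}|w|^p\,ds\le\frac{t^N}{N},
\]
where the last bound uses $|w|\le1$ on $[0,b]$, with $b=b_{N,1}(0)$. Two consequences follow.

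\emph{(i) At $t=b$.} Since $\dot w(b)=0$, we get $m^p=pH(b)$. It therefore suffices to show $H(b)\le\frac{1+\log N}{pN}$.

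\emph{(ii) At intermediate times.} We have
\[
H(t)\le\frac1N+\frac{|c|}{t}\,|w|\,|\dot w|^{p-1}\le\frac1N+\frac{|c|}{t}\Bigl(\tfrac{p}{p-1}\Bigr)^{\frac{p-1}{p}}H(t)^{\frac{p-1}{p}} .
\]
Here I used $|w|\le1$ and $|\dot w|^{p}\le\frac{p}{p-1}H$.

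Since $H\le H(0)=\frac1p$, inequality (ii) yields $H(t)\le\frac1N+\frac{C_p(N+p)}{pNt}$ for all $t\in(0,b]$. In particular $H(t)\lesssim\frac1N$ once $t$ is a large multiple of a fixed scale. The logarithm should come from bridging the initial region, where $H\approx\frac1p$, to this region. I will do this in one of two ways, and choose between them once the constants are computed.

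The first option integrates the dissipation $\dot H=-\frac{N-1}{t}|\dot w|^p$ over the intermediate times. To bound $|\dot w|^p$ from below there, I will use the explicit growth bounds recorded above,
\[
\dot w^{p-1}\le\frac{t}{N},\qquad w+1\le\frac{p-1}{p}N^{-\frac1{p-1}}t^{\frac p{p-1}},
\]
together with the integral form of the equation.

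The second option feeds the improved bound on $H$ back into (ii), which gives a recursion of the form
\[
H(t)\le\frac1N+\frac{C}{t}\Bigl(\frac1N+\frac{C}{t}\Bigr)^{\frac{p-1}{p}} .
\]
I would then estimate $\int_0^b t^{N-1}|w|^p$ directly using $|w|^p\le pH(t)$. This amounts to averaging the bound $H\lesssim\frac1N+\frac{C}{t}$ against the weight $Nt^{N-1}/b^N$, with the $\frac1t$ term contributing the $\frac{\log N}{N}$ part.

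The main obstacle is getting the sharp, $p$-independent constant $\frac{1+\log N}{N}$ rather than $C_p\frac{\log N}{N}$. In particular, the sign of $c=\frac{N-p}{p}$ changes with $p$, and the crude bound $|w||\dot w|^{p-1}\le(\frac{p}{p-1})^{(p-1)/p}H^{(p-1)/p}$ may lose a factor. If that happens, I would first try to keep the cross term $c\,t^{N-1}w\phi$ with its sign. On $[0,t_0]$, where $t_0$ is the first zero of $w$, both $w<0$ and $\phi>0$, so the term has a definite sign there. On $[t_0,b]$ it is controlled by the mass balance $\int_0^b s^{N-1}|w|^{p-2}w\,ds=0$, which follows from the integral form of the equation at $b$.
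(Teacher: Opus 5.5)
Your proposal does not reach the stated inequality. It ends with two unexecuted options, and you yourself flag the $p$-independent constant as unresolved.

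Your Pohozaev identity is correct, and at $t=b$ it gives more than you drew from it. Since $\phi(b)=0$, you get $b^NH(b)=\int_0^b s^{N-1}|w|^p\,ds\le b^N/N$, i.e.\ $m_{N,\lambda}(0)^p=pH(b)\le p/N$. That is enough for how the lemma is used later (choosing $N$ large so that $m_{N,\lambda}(0)<\beta_0$). But it is weaker than $\frac{1+\log N}{N}$ whenever $p>1+\log N$, so it does not prove the lemma as stated.

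Neither option closes this gap. In (ii) the coefficient $|c|=|N-p|/p$ grows linearly in $N$. So from $H\le\frac1p$ you only get $H(t)\le\frac1N+\frac{|N-p|}{p(p-1)^{(p-1)/p}\,t}$, not $\frac1N+\frac{C_p(N+p)}{pNt}$. The claim that $H\lesssim\frac1N$ past a fixed time scale therefore does not follow. Moreover, the averaging step uses $|w|^p\le pH$, which loses a factor $p$ exactly where you need a $p$-independent constant. In option 1, the bounds you cite are upper bounds on $\dot w$ and on $w+1$. They do not by themselves give the lower bound on the dissipation that you want.

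The missing idea is a \emph{lower} bound on $\dot w$ in terms of $v=-w$, which converts into a quantitative time at which the energy is already small. Take $\lambda=1$. Since $v$ is decreasing before its first zero, the integral form gives $\dot w^{p-1}=t^{1-N}\int_0^ts^{N-1}v^{p-1}\,ds\ge\frac tN\,v(t)^{p-1}$. Hence $\dot v/v\le-(t/N)^{1/(p-1)}$ and $v(t)\le\exp\bigl(-\frac{p-1}{p}N^{-1/(p-1)}t^{p/(p-1)}\bigr)$. So the first time $t_h$ with $v(t_h)=h\in(0,1)$ satisfies $t_h^{p/(p-1)}\le\frac{p}{p-1}N^{1/(p-1)}\log(1/h)$. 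Combined with the upper bound $\dot w^{p-1}\le t/N$, this gives $(p-1)\dot w(t_h)^p\le\frac pN\log(1/h)$. Monotonicity of your $H$ on $[t_h,b]$ then yields
\[
m_{N,\lambda}(0)^p=pH(b)\le pH(t_h)\le h^p+\frac pN\log\frac1h ,
\]
and the choice $h=N^{-1/p}$ gives exactly $\frac{1+\log N}{N}$. This is the paper's argument; no Pohozaev identity is needed.
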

\begin{proof}
    Let $w=w_{N,0,\lambda}$, and write $v=-w$.  Before the first zero we have
    \begin{align*}
        \dot{w}(t)^{p-1}=\lambda t^{1-N}\int_0^ts^{N-1}v(s)^{p-1}ds.
    \end{align*}
    Since $v$ is decreasing, we have
    \begin{align*}
        \frac{\lambda t}{N}(v(t))^{p-1}\leq (\dot{w}(t))^{p-1}\leq \frac{\lambda t}{N}.
    \end{align*}
    From the left inequality, we have
    \begin{align*}
        \frac{\dot{v}(t)}{v(t)}\leq-\left(\frac{\lambda t}{N}\right)^{\frac{1}{p-1}},
    \end{align*}
    so that
    \begin{align*}
        v(t)\leq \exp\left(-\frac{p-1}{p}\left(\frac{\lambda}{N}\right)^{\frac{1}{p-1}}t^{\frac{p}{p-1}}\right).
    \end{align*}
    For $0<h<1$, let $t_h$ be the first time $v(t_h)=h$.  Then
    \begin{align*}
        t_h^{\frac{p}{p-1}}\leq\frac{p}{p-1}\left(\frac{N}{\lambda}\right)^{\frac{1}{p-1}}\log(h^{-1}).
    \end{align*}
    Combining with $\dot{w}(t_h)^p\leq (\lambda t_h/N)^{p/(p-1)}$ gives
    \begin{align*}
        \dot{w}(t_h)^p\leq \frac{p\lambda}{(p-1)N}\log(h^{-1}).
    \end{align*}
On the interval $\dot{w}\geq 0$, the energy
\begin{align*}
    \mathcal{E}_\lambda(t)=\dot{w}(t)^p+\frac{\lambda}{p-1}|w(t)|^p
\end{align*}
satisfies 
\begin{align*}
    \dot{\mathcal{E}}_\lambda(t)=-\frac{p(N-1)}{(p-1)t}\dot{w}(t)^p\leq 0.
\end{align*}
    At the maximum,
    \begin{align*}
        \mathcal{E}_\lambda(b_{N,\lambda}(0))=\frac{\lambda}{p-1}m_{N,\lambda}(0)^p.
    \end{align*}
    Monotonicity yields
    \begin{align*}
        m_{N,\lambda}(0)^p\leq \frac{p-1}{\lambda}\mathcal{E}_\lambda(t_h)=h^p+\frac{p-1}{\lambda}\dot{w}(t_h)^p\leq h^p+\frac{p}{N}\log(h^{-1}).
    \end{align*}
    Choose $h=N^{-1/p}$ for the estimate. 
\end{proof}

Now choose an integer $N\geq n$ so large that $ \left(\frac{(1+\log(N))}{N}\right)^{\frac{1}{p}}<\beta_0$.  Then for any $b_0\in [\beta_0,1)$, since $\displaystyle \lim_{a\to \infty} m_{N,\lambda}(a)=1$, continuity yields some finite $a_*>0$ such that $m_{N,\lambda}(a_*)=b_0$.  Then there is a positive profile $z_0$ on $(-1,b_0)$ such that $E_N[z_0]=0$ and
\begin{align*}
    \int_{-1}^{b_0}z_0(r)^{-1/p}dr\geq \lambda^{-1/p}(p-1)^{1/p}\pi_p.
\end{align*}

For $b_0=1$, we use the symmetric model with $T\equiv 0$. Its profile is
\begin{align*}
    z_{\text{sym}}(r)=\frac{\lambda}{p-1}(1-|r|^p), \quad -1<r<1.
\end{align*}Note
\begin{align*}
    \int_{-1}^1z_{\text{sym}}(r)^{-1/p}dr=\lambda^{-1/p}(p-1)^{1/p}\pi_p.
\end{align*}

\subsection{Strict perturbations of comparison profiles}
\begin{lemma}\label{converge}
    Fix $\lambda>0$ and a compact interval $J$.  Let $z_0>0$ be $C^2$ on an open interval containing $J$, with $E_N[z_0]=0$ there.  For sufficiently small $\eps>0$, there are positive $C^2$ profiles $z_\eps$ on a neighborhood of $J$ such that 
    \begin{align*}
        z_\eps \to z_0 \quad \text{ in }C^2(J),
    \end{align*}
    and
    \begin{align*}
        E_N[z_\eps]=p(N-1)\eps\lambda^{2/p}z_\eps^{2-2/p}>0.
    \end{align*}
\end{lemma}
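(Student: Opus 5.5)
The plan is to read the identity $E_N[z]=p(N-1)\eps\lambda^{2/p}z^{2-2/p}$ as a second-order ODE for $z$ in the variable $u$. We then obtain $z_\eps$ as the solution of an initial value problem sharing its Cauchy data with $z_0$, and conclude by continuous dependence on the parameter $\eps$.

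\emph{Writing the equation in normal form.} Recall
\[
d=\lambda f(u)+\tfrac{p-1}{p}z',\qquad d'=\lambda f'(u)+\tfrac{p-1}{p}z'',
\]
with $f(u)=|u|^{p-2}u$. Since $p\ge2$, $f\in C^1$ and $f'(u)=(p-1)|u|^{p-2}$ is continuous. Then $E_N[z]$ is affine in $z''$ with coefficient $-(p-1)z$, so the equation $E_N[z]=G_\eps(z):=p(N-1)\eps\lambda^{2/p}z^{2-2/p}$ is equivalent, wherever $z>0$, to
\[
z''=\Phi_\eps(u,z,z'):=\frac{(p-1)z'd+\frac{p}{N-1}d^2-p\lambda f'(u)z-G_\eps(z)}{(p-1)z}.
\]
Here $d$ is regarded as the explicit function $\lambda f(u)+\tfrac{p-1}{p}z'$. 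On the region $\{z>0\}$, $\Phi_\eps$ is continuous in $(u,z,z',\eps)$. It is also locally Lipschitz in $(z,z')$, uniformly for $u$ in compacts and $\eps\in[0,1]$, because $z\mapsto z^{2-2/p}$ is smooth for $z>0$. For $\eps=0$ the equation is exactly $E_N[z]=0$, so $z_0$ solves $z_0''=\Phi_0(u,z_0,z_0')$ on its open interval of definition.

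\emph{Construction.} Choose a compact interval $J'$ with $J\subset\Int J'$ contained in the domain of $z_0$, and fix a point $r_0\in J$. Let $z_\eps$ be the solution of $z''=\Phi_\eps(u,z,z')$ with $z_\eps(r_0)=z_0(r_0)$ and $z_\eps'(r_0)=z_0'(r_0)$. Since $\min_{J'}z_0>0$, $z_0$ stays in a compact subset $K$ of $\{z>0\}$ on $J'$.

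Standard continuous dependence on parameters for ODEs that are Lipschitz in the state variables then applies; this is a Gronwall argument on the first-order system for $(z,z')$, whose coefficients are only continuous in $u$, which is enough. It shows that for $\eps$ small the solution $z_\eps$ exists on all of $J'$, stays within a neighborhood of $K$ (hence $z_\eps>0$), and satisfies $z_\eps\to z_0$ in $C^1(J')$. Since $z_\eps''=\Phi_\eps(u,z_\eps,z_\eps')$ and $\Phi$ is uniformly continuous on compacts, we also get $z_\eps''\to z_0''$ uniformly, that is, $z_\eps\to z_0$ in $C^2(J')\supset C^2(J)$. Each $z_\eps$ is $C^2$ on the neighborhood $\Int J'$ of $J$, and by construction $E_N[z_\eps]=G_\eps(z_\eps)>0$ there.

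\emph{Expected obstacle.} The main point needing care is uniform existence on the whole of $J'$: the equation degenerates where $z\to0$, so we must ensure the perturbed solutions do not escape $\{z>0\}$ before reaching the endpoints. The first thing to try is to restrict the state space to a compact neighborhood of the graph of $(z_0,z_0')$ over $J'$ on which $\Phi_\eps-\Phi_0=-G_\eps(z)/((p-1)z)=O(\eps)$ uniformly. The Gronwall estimate
\[
|(z_\eps-z_0,\,z_\eps'-z_0')|\le C\eps\, e^{L|u-r_0|}
\]
then keeps the solution inside this neighborhood for $\eps$ small, which closes the continuation argument.

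A secondary point is that $f'$ is merely continuous (not Lipschitz) near $u=0$ when $2\le p<3$. This is harmless, because $f'$ enters only as a continuous coefficient and the Lipschitz property is needed only in $(z,z')$.
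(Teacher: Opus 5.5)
Your proof is correct and follows the paper's argument: the paper also treats $E_N[z]=p(N-1)\eps\lambda^{2/p}z^{2-2/p}$ as an initial value problem with the Cauchy data of $z_0$ at an interior point and concludes by continuous dependence on $\eps$. The only difference is that the paper works with the first-order system in $(z,d)$ instead of your $(z,z')$, which keeps $f'$ out of the vector field, and your Gronwall/continuation argument supplies detail the paper leaves implicit.
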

\begin{proof}
    Fix an interior value $u_*$ in the domain of $z_0$, and solve
    \begin{equation}
        \begin{cases}\label{initial}
            z'_\eps=\frac{p}{p-1}(d_\eps-\lambda f),\\
            d_\eps'=\frac{\frac{N}{N-1}d^2_\eps-\lambda f d_\eps}{z_\eps}-(N-1)\eps\lambda^{2/p}z_\eps^{(p-2)/p},\\
            (z_\eps,d_\eps)(u_*)=(z_0,d_0)(u_*).
        \end{cases}
    \end{equation}
   By choosing a slightly larger compact interval contained in the open interval if necessary, we can assume $u_*\in J$.  The equation $E_N[z_0]=0$ says $(z_0,d_0)$ solves \eqref{initial} at $\eps=0$. Since $z_0>0$ on $J$ and $f \in C^1$ for $p\geq 2$, continuous dependence and continuation give positive solutions throughout $J$ for small $\eps$, with $C^2$ uniform convergence of $z_\eps\to z_0$.  
\end{proof}

\begin{corollary}
    Fix $\lambda >0$ and a positive $C^2$ profile $z_0$ on a neighborhood of a compact interval $J$ containing zero, with $E_N[z_0]=0$ there.  For every $\delta>0$, there is a $\kappa_\delta>0$ such that, for every $u$ satisfying \eqref{plaplaceeigen}, $u(M)\subset J$, and $\kappa <\kappa_\delta$, we have
    \begin{align*}
        |\nabla u|^p\leq(1+\delta)z_0(u).
    \end{align*}
    \end{corollary}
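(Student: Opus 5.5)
Combine Lemma \ref{converge} with Proposition \ref{gradestprof}.

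\emph{Step 1: choose a strict profile.} Given $\delta>0$, Lemma \ref{converge} produces strict profiles $z_\eps$ with $z_\eps\to z_0$ in $C^2(J)$. Since $z_0\ge \min_J z_0>0$ on the compact set $J$, uniform convergence lets us fix one $\eps>0$ so small that
\[
z_\eps\le(1+\delta)z_0 \quad \text{on } J,
\]
and also $z_\eps>0$ on a neighborhood of $J$. Its residual is
\[
E_N[z_\eps]=p(N-1)\eps\lambda^{2/p}z_\eps^{2-2/p},
\]
so
\[
E_{\min}=p(N-1)\eps\lambda^{2/p}\,(\min_J z_\eps)^{2-2/p}>0 .
\]

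\emph{Step 2: apply the gradient comparison.} Apply Proposition \ref{gradestprof} with $z=z_\eps$ on the interval $J$. The proposition requires $u(M)\subset J$. Its rescaling argument $v_c=u/c$ additionally needs $u(M)/c\subset J$ for $c\ge1$; this holds because $J$ contains $0$ and is an interval, which is exactly why the hypothesis $0\in J$ appears. The quantities $z_{\min}$, $z_{\max}$, $M_{d'}$, $M_B$, $\alpha$, $\beta$ and $\eps_0$ in that proposition are determined by $z_\eps$, $\lambda$, $N$ and $p$ alone. The threshold also involves $C_E=C_S\max\{2,(\lambda^{2/p}D^2)^{-1}\}$. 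Here $C_S$ comes from Proposition \ref{prop:geometry} once $\kappa$ is below the geometric threshold $\eps$ there. The remaining factor $(\lambda^{2/p}D^2)^{-1}$ is bounded using Lemma \ref{lem:rough-p}: it gives $\lambda^{2/p}D^2\ge 2C_P/p^2$, so
\[
C_E\le C_S\max\{2,\,p^2/(2C_P)\},
\]
a bound independent of $M$. We therefore set
\[
\kappa_\delta=\min\left\{\eps_{\mathrm{geom}},\ \frac{\eps_0}{p\,2^{\gamma/(\gamma-1)}\,\bar C_E\, z_{\max}}\right\},
\]
where $\bar C_E$ is this uniform bound and $\eps_{\mathrm{geom}}$ is the threshold of Proposition \ref{prop:geometry}. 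For $\kappa<\kappa_\delta$, Proposition \ref{gradestprof} gives
\[
|\nabla u|^p\le z_\eps(u)\le(1+\delta)z_0(u).
\]

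\emph{Main obstacle: uniformity of the threshold.} The delicate point is making sure $\kappa_\delta$ does not depend on the particular manifold or eigenfunction.
\begin{itemize}
\item Everything about the profile is fixed once $\lambda$, $J$ and $\eps$ are fixed.
\item The only manifold-dependent quantities are $C_S$ and the factor $(\lambda^{2/p}D^2)^{-1}$ inside $C_E$. The first is controlled by Proposition \ref{prop:geometry} and the second by Lemma \ref{lem:rough-p}.
\item If one instead regards $\lambda$ as varying, as in the later application, Remark \ref{rescaling} shows that the ratio $\eps_0/z_{\max}$ is scale invariant. The same $\kappa_\delta$ then works for all $\lambda$ with the same normalized profile $z_0/\lambda$.
\end{itemize}

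A secondary point is the $C^2$ regularity of $z_\eps$ on a neighborhood of $J$, not merely on $J$ itself. This is handled as in the proof of Lemma \ref{converge}: solve the ODE system there on a slightly larger compact interval inside the domain of $z_0$.
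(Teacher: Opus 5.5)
Your proposal is correct and follows essentially the same route as the paper. You choose a strict profile $z_\eps\le(1+\delta)z_0$ via Lemma~\ref{converge}, bound $C_E\le C_S\max\{2,p^2/(2C_P)\}$ uniformly using Proposition~\ref{prop:geometry} and Lemma~\ref{lem:rough-p}, and then apply Proposition~\ref{gradestprof} with $\kappa_\delta$ the minimum of the geometric and comparison thresholds; your added observation that $0\in J$ is what keeps the rescaled functions $v_c=u/c$ inside $J$ is a useful clarification that the paper leaves implicit.
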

\begin{proof} Fix \(\delta>0\). By Lemma 4.2 and \(\min_J z_0>0\), choose a positive \(C^2\) profile \(z_\varepsilon\) such that
\[
E_N[z_\varepsilon]>0,
\qquad
z_\varepsilon\leq(1+\delta)z_0
\quad\text{on }J.
\]
For sufficiently small \(\kappa\), Proposition \ref{prop:geometry} gives the Sobolev and Poincaré inequalities with uniform constants \(C_S,C_P\), and Lemma \ref{lem:rough-p} gives
\[
C_E\leq C_S\max\left\{2,\frac{p^2}{2C_P}\right\}.
\]
Since \(z_\varepsilon\) is fixed and \(C_E\) is uniformly bounded, Proposition \ref{gradestprof} yields
\[
|\nabla u|^p\leq z_\varepsilon(u)
\leq(1+\delta)z_0(u)
\]
under a further positive smallness threshold for \(\kappa\). The minimum of the geometric and comparison thresholds gives the required constant \(\kappa_\delta>0\).
\end{proof}

\section{Proof of Theorem \ref{thm:main}}

\begin{proof}[Proof of the eigenvalue estimate]
    Fix a reference $\lambda>0$.  Let $\theta=(1-\delta)^{\frac{1}{p}}<1$. For contradiction, suppose such an $\eps$ does not exist.  Then there is a sequence $(M_i,g_i)$ with
    \begin{align*}
        \kappa_i\to 0, \quad \lambda_iD_i^p<(p-1)\theta^p\pi_p^p, \quad \lambda_i:=\lambda_{1,p}(M_i)>0.
    \end{align*}
    Fix $\nu \in (n,2q)$.  After discarding finitely many terms, we can assume $\kappa_i$ is sufficiently small so that we have a common Sobolev and Poincar\'e constant, $C_S$ and $C_P$, respectively. Choose first eigenfunctions with
    \begin{align*}
        \min_{M_i}u_i=-1, \quad \max_{M_i}u_i=b_i\in[\beta_0,1],\quad \lambda_i^{2/p}D_i^2\geq \frac{2C_P}{p^2}.
    \end{align*}
    After possibly passing to a subsequence, we have $b_i\to b_\infty \in [\beta_0,1]$. Choose $N$ sufficiently large so that $m_{N,\lambda}(0)<\beta_0$.  If $b_\infty <1$, choose a radial model with eigenvalue $\lambda$ with maximum $b_\infty$.  If $b_\infty=1$, choose the symmetric model at the same eigenvalue.  Let $z_0$ be its positive interior profile.  By continuity, we shrink the interval so that there is a $\xi\in (0,1)$ with
    \begin{align*}
        \int_{-\xi}^{\xi b_\infty}z_0(r)^{-1/p}dr>\theta \lambda^{-1/p}(p-1)^{1/p}\pi_p.
    \end{align*}
    Choose a compact interval $J$ such that
    \begin{align*}
        [-\xi,\xi b_\infty]\subset \Int (J) \subset J\subset (-1,b_\infty).
    \end{align*}
    Then $[-\xi,\xi b_i]\subset J$ for sufficiently large $i$.
    Applying Lemma \ref{converge} with $\lambda$ and $z_0$, we can choose $\eps>0$ sufficiently small so that
    \begin{align*}
         \int_{-\xi}^{\xi b_\infty}z_\eps(r)^{-1/p}dr>\theta \lambda^{-1/p}(p-1)^{1/p}\pi_p.
    \end{align*}
    Note $z_\eps \to z_0$ uniformly on $J$ and $\min_J z_0>0$. 
In order to apply a diameter comparison for $D_i$, we will need to obtain a gradient estimate on each $M_i$.  Set $U_i=\xi u_i$ and $z_i=\left(\frac{\lambda_i}{\lambda}\right)z_\eps$.  Then $z_i$ is a strict profile at the eigenvalue $\lambda_i$.  By homogeneity, $-\Delta_pU_i=\lambda_i f(U_i)$ with $U_i(M_i)\subset [-\xi,\xi b_i]\subset J$ for sufficiently large $i$.  Note that $\frac{z_i}{\lambda_i}=\frac{z_\eps}{\lambda}$ and recall Remark \ref{rescaling}. Then $\kappa_i\to 0$ hence for sufficiently large $i$ we can apply Proposition \ref{gradestprof} to obtain
\begin{align*}
    |\nabla U_i|^p\leq z_i(U_i)=\frac{\lambda_i}{\lambda}z_\eps(U_i).
\end{align*}
Using this gradient estimate gives
\begin{align*}
    D_i \geq \int_{-\xi}^{\xi b_i}z_i(r)^{-\frac{1}{p}}dr.
\end{align*}
Since $\eps$, $J$, and $z_\eps$ are fixed, and $z_\eps>0$ on the compact interval $J$, we have
\begin{align*}
    \int_{-\xi}^{\xi b_i}z_\eps(r)^{-\frac{1}{p}}dr \to \int_{-\xi}^{\xi b_\infty}z_\eps(r)^{-\frac{1}{p}}dr.
\end{align*}
\begin{align*}
    \theta (p-1)^{\frac{1}{p}}\pi_p\geq \liminf_{i\to \infty}\lambda_i^{\frac{1}{p}}D_i\geq \lambda^{\frac{1}{p}}\int_{-\xi}^{\xi b_\infty}z_\eps(r)^{-\frac{1}{p}}dr >\theta(p-1)^{\frac{1}{p}}\pi_p,
\end{align*}
    a contradiction.
\end{proof}

\appendix
\section{Length estimate for the radial model}
Let $a >0$ and recall that $\dot{w}>0$ on $(a,b)$.   Let $t_0\in (a,b)$ be the unique zero of $w$. 
Define
\begin{align*}
    \alpha:= \left(\frac{\lambda}{p-1}\right)^{\frac{1}{p}}, \quad X(t):= \frac{\alpha w(t)}{\dot{w}(t)}.
\end{align*}
From the model equation for the zero curvature model
\begin{equation*}
     \frac{d}{dt}(|\dot{w}|^{p-2}\dot{w})+\frac{N-1}{t}|\dot{w}|^{p-2}\dot{w}+\lambda|w|^{p-2}w=0
     \end{equation*}
     we obtain
     \begin{align*}
         \dot{X}=\alpha(1+|X|^p)+\frac{N-1}{(p-1)t}X.
     \end{align*}
From $w(a)=-1$, $\dot{w}(a)=0$, $w(b)=m>0$, $\dot{w}(b)=0$, we see that $X$ has endpoint limits $-\infty$ and $\infty$.   Near $a$, the term $\alpha|X|^p$ dominates $X$ so $\dot{X}>0$ near $a$.  If $\dot{X}$ had a first zero, then $X<0$ there.  Differentiating once more, we get
\begin{align*}
    \ddot{X}=-\frac{N-1}{(p-1)t^2}X>0.
\end{align*}
However at a first zero, $\dot{X}$ must have derivative $\ddot{X}\leq 0$, a contradiction, hence $\dot{X}>0$. 

Now put 
\begin{align*}
    \gamma:=\frac{N-1}{(p-1)t_0}.
\end{align*}
It can be shown that
\begin{align*}
    \frac{N-1}{(p-1)t}X\leq \gamma X.
\end{align*}
Thus
\begin{align*}
    0<\dot{X}\leq \alpha(1+|X|^p)+\gamma X.
\end{align*}
Since $X$ takes every real value, we know that $\alpha(1+|x|^p)+\gamma x>0$ for all $x \in \mathbb{R}$. We have
\begin{align*}
    \ell=\int_{-\infty}^\infty\frac{dx}{\dot{X}(X^{-1}(x))}\geq \int_{-\infty}^{\infty}\frac{dx}{\alpha(1+|x|^p)+\gamma x}.
\end{align*}
Splitting the positive and negative values of $x$, we have for $A=\alpha(1+x^p)$, 
\begin{align*}
    \frac{1}{A+\gamma x}+\frac{1}{A-\gamma x}=\frac{2A}{A^2-\gamma^2 x^2}\geq \frac{2}{A}.
\end{align*}
Hence
\begin{align*}
    \ell \geq \frac{2}{\alpha}\int_0^\infty \frac{dx}{1+x^p}=\frac{\pi_p}{\alpha}.
\end{align*}
\bibliography{reference}
\end{document}